\newtheorem{dummy}{anything}[section]
\newtheorem{theorem}[dummy]{Theorem}
\newtheorem{lemma}[dummy]{Lemma}
\newtheorem{corollary}[dummy]{Corollary}
\theoremstyle{definition}
\newtheorem{definition}[dummy]{Definition}
\newtheorem{example}[dummy]{Example}
\newtheorem{remark}[dummy]{Remark}
\newtheorem{question}[dummy]{Question}
\newcommand{\bZ}{\mathbb Z}
\newcommand{\bR}{\mathbb R}
\newcommand{\ha}{\hookrightarrow}
\def\:{\mkern 1.2mu \colon}
\newcommand{\mmatrix}[4]{\left (\vcenter
	{\xymatrix@C-2pc@R-2pc{#1&#2\\#3&#4} } \right )}
\DeclareMathOperator{\im}{Im}
\numberwithin{equation}{section}
\begin{document}
	\title[ Elementary Methods for Persistent Homotopy Groups ]
	{Elementary Methods for Persistent Homotopy Groups}
	
	\subjclass[2020]{Primary: 55P65, 55Q05; Secondary: 55N31, 57Q05}
	\keywords{persistent fundamental group, Van Kampen theorem, interleaving, excision theorem, Hurewicz theorem.}
	
	\author{ Henry Adams, Mehmet Al\.{I} Batan, Mehmetc{\.I}k Pamuk, Han\.{I}fe Varl{\i}}
	
	\address{Department of Mathematics
		\newline\indent
		University of Florida
		\newline\indent
		Gainesville, Florida 32611, USA}  
	\email{henry.adams{@}ufl.edu}
	
	\address{Department of Mathematics
		\newline\indent
		Middle East Technical University
		\newline\indent
		Ankara 06531, Turkey}  
	\email{alibatan{@}metu.edu.tr,  mpamuk{@}metu.edu.tr}	
	
	\address{Department of Mathematics 
		\newline\indent
		\c{C}ank{\i}r{\i} Karatekin University
		\newline\indent
		\c{C}ank{\i}r{\i} 18100, Turkey} 
	\email{hanifevarli@karatekin.edu.tr}

	\date{\today}	
\begin{abstract}

We study the foundational properties of persistent homotopy groups and develop elementary computational methods for their analysis.
Our main theorems are persistent analogues of the Van Kampen, excision, suspension, and Hurewicz theorems.
We prove a persistent excision theorem, derive from it a persistent Freudenthal suspension theorem, and obtain a persistent Hurewicz theorem relating the first nonzero persistent homotopy group of a space to its persistent homology.
As an application, we compute sublevelset persistent homotopy groups of alkane energy landscapes and show these invariants capture nontrivial loops and higher‑dimensional features that complement the information given by persistent homology.

\end{abstract}

	
	\maketitle


\section{Introduction}

	Topological data analysis is a recently emerging and fast-growing field for analyzing complex data using geometry and topology. Persistent homology is a powerful tool in topological data analysis for investigating data structure.  Persistent homology studies topological features of a space that persist for some range of parameter values.
	
	Let $X$ be a topological space.  By a filtration of $X$, $F_{X}$, we mean a family $\{X_k\}$ of increasing subsets of $X$ with respect to inclusions, i.e., 
	$X_k \subset X_l$ for $k < l \in \bR$.  Persistent homology is based on analyzing the homological changes occurring along the filtration.  It captures the topology of a filtration in terms of a multiset of intervals, called barcodes, corresponding to lifespans of topological features. This is done by considering the homomorphisms 
	$H_*(X_{kl}) \colon H_*(X_k)\to H_*(X_l)$ induced by the inclusion maps $X_{kl} \colon X_k\hookrightarrow X_l$.	
	Persistent homology has been developed as an algebraic method to study topological features of filtered spaces built from data, such as components, graph structures, holes, and voids.
    This theory has many applications and has become a central tool in topological data analysis \cite{carlsson, ghrist}.
	
	Although many notions from algebraic topology have been introduced into the setting of persistent homology, some of the most basic notions from homotopy theory 
	remain largely absent from the literature, with a few notable exceptions. In their pioneering work, Frosini and Mulazzani \cite{Frossini-Mulazzani} introduced and studied 
	size homotopy groups as an algebraic tool that allows one to obtain more efficient lower bounds for natural size distances.
    In~\cite{letscher}, instead of using homology, Letscher applied the homotopy functor to filtered spaces and defined persistent homotopy groups (see Section~\ref{2} for further details).
	Concurrent with our paper, M\'{e}moli and Zhou have studied persistent homotopy groups of metric spaces and their stability properties \cite{memoli-zhou}.
 
	Although homotopy groups are more challenging to work with than homology groups, they can also capture more information.
    For example, for knots, one can 
	work with the fundamental group of the complement \cite{gordon-luecke}, whereas the homology groups of the complement give no further information.  
	Letscher~\cite{letscher} applied persistent homotopy to detect if a complex is knotted and if that knotting can be unknotted in a larger complex.
    He also applied these techniques to analyze protein, RNA, and DNA structures.  Moreover, in~\cite{brendel}, persistent fundamental group calculations are carried out for knots arising from experimental 
	data on protein backbones.   
	
	One possible application of persistent homotopy groups is in image analysis.  Methods for image analysis have become an essential tool for many sciences, and topological data analysis has proven highly successful in aiding in a variety of such studies (for applications to environmental science, see for example~\cite{muszynski2019topological,smith,ver-hoef}).
    The theorems in our paper allow one to patch together topological information about smaller sub-images to obtain results about an entire image or to understand how various modifications (e.g.\ taking quotients) to a filtered complex change the persistence diagram (see~\cite[Section~5] {bleile-garin-heiss-maggs-robins}).

    In this paper, we apply persistent homotopy groups to the study of chemical energy landscapes.
    The conformation space of a molecule parametrizes its possible different shapes.
    The energy function on this conformation space determines which configurations are more likely and how the molecule might transition from one configuration to another.
    As one varies the energy threshold, the number of connected components in a sublevelset of the conformation space determines the number of essentially distinct conformations, and the number of 1-dimensional holes is related to the number of minimal energy paths between local energy minima.
    In \cite{Mirth2021}, the persistent homology diagrams of the alkane molecules were characterized, and in this paper, we explain how the persistent homotopy groups of these molecules provide even more information.

    Our paper aims to develop elementary methods for determining persistent homotopy groups.
    In Section~\ref{2}, we recall the definition of persistent homotopy groups, particularly persistent fundamental groups.
	Calculating persistent homotopy groups can be challenging, even at a single filtration level.
    In such cases, it might be better to consider a space $X$ as a union of its subspaces whose homotopy groups are easier to calculate. For this reason, we prove in Section~\ref{2} that the Van Kampen theorem remains valid for persistent fundamental groups.
    In Subsection~\ref{groupoid}, we define fundamental groupoids and discuss a persistent version of the Van Kampen theorem for fundamental groupoids.
    When $X=A\cup B$ is a filtered topological space equipped with a filtration $F_X$, these theorems show how to understand the persistent fundamental group or groupoid of the filtration on $X$ in terms of the induced filtrations  on $A$, $B$, and $A\cap B$. These filtrations are denoted by $F_A, F_B$ and $F_{A\cap B}$, respectively, which are the families $\{A_k\}$, $\{B_k\}$ and $\{(A\cap B)_k\}$ where $A_k:=X_k\cap A$, $B_k:=X_k\cap B$, and $(A\cap B)_k:=X_k\cap (A\cap B)$, for any $k\in \bR$.
    
    In Section~\ref{3}, following \cite[Section~3]{bubenik-scott}, we define the interleaving distance between persistent homotopy groups.  
	This section discusses the relationship between the Van Kampen theorem and the interleaving distance between persistent fundamental groups.
Let  $X$ and $X'$ be based topological spaces with filtrations $F_X$ and $G_{X'}$.
We show that if $X$ and $X'$ decompose as the union of path-connected open subsets $A$ and $B$, and $A'$ and $B'$ respectively, then the interleaving distance between persistent fundamental group functors $\pi_1F_{X}$ and $\pi_1G_{X'}$ (see Section \ref{2.2} for the definition) is bounded above by $ \textrm{max}~ \{( d_I(\pi_1F_{A}, \pi_1G_{A'}), d_I(\pi_1F_{B}, \pi_1G_{B'}), d_I(\pi_1F_{A\cap B}, \pi_1G_{A'\cap B'}) \}$, the maximum interleaving distance between the corresponding pieces (Corollary~\ref{cor:interleaving}).	
 
In Section~\ref{4}, we give an excision theorem for persistent homotopy groups (Theorem~\ref{Exc}).
From it, we derive from a persistent Freudenthal suspension theorem (Theorem~\ref{suspension}).
In \cite[Theorem 1.5]{memoli-zhou}, the authors prove a persistent version of the Hurewicz theorem for persistent fundamental groups.
In this section, we also prove a persistent version of the Hurewicz theorem for higher persistent homotopy groups (Theorem \ref{hurewicz}).

In Section~\ref{5}, we analyze the sublevelset persistent homotopy groups of the energy landscape of alkane molecules. We also explain the additional information these persistent homotopy groups contain beyond what persistent homology provides.

Our theorems apply most naturally when a filtration is given on a topological space $X=A\cup B$; that filtration then induces a filtration on $A$ and $B$.
For example, a real-valued function $f\colon X\to \bR$ induces a sublevelset filtration $\{f^{-1}(-\infty,k)\}_{k\in \bR}$ on $X$, and also (by restriction) on $A$ and $B$.
This setting is common in applications, for example when the function $f$ is a measurement such as temperature or pressure defined on a subset $X$ of Euclidean space, or for example when $f$ is an energy function defined on a configuration space $X$ of a chemical system, and then one decomposes $X$ into parts.
Another common application of topology is when $Z$ is a point cloud dataset, and one uses a Vietoris--Rips simplicial complex to measure the shape of the data~\cite{carlsson}.
We caution the reader since the Vietoris--Rips complex of a union could be larger than the union of Vietoris--Rips complexes: If dataset $Z$ is decomposed as $Z=Z_A\cup Z_B$ then we could have $X\coloneqq \mathrm{VR}(Z;r)\supsetneq \mathrm{VR}(Z_A;r)\cup \mathrm{VR}(Z_B;r)$, and hence in order to obtain $X=A\cup B$ one might have to choose $A$ and $B$ to be larger than $\mathrm{VR}(Z_A;r)$ and $\mathrm{VR}(Z_B;r)$, respectively.

          \vskip .3 cm	
		
	\noindent{\bf {Acknowledgements.}} The authors thank Claudia Landi, Facundo M\'{e}moli, and Ling Zhou for their comments on the second version of this paper.  
	This research was supported by the Scientific and Technological Research Council of Turkey (TUBITAK) [grants number 117F015 and 220N359 ].

\section{The Van Kampen Theorem for Persistent Fundamental Groups}
\label{2}
	
    \subsection{Persistent Homotopy Groups}
	
	Let $\mathbf{Top}_{\bullet}$ be the category of based topological spaces and basepoint-preserving continuous maps.  Let $\mathbf{R}$ denote the category whose objects are the real numbers $\bR$ and which admits a unique morphism $k\to l$ whenever $k\leq l$.  Let us also denote the functors from $\mathbf{R}$ to the category $\mathbf{Top}_{\bullet}$ by $\mathbf{Top_{\bullet}^{R}}$.  
	
    Throughout the paper, we regard filtered topological spaces (topological spaces admitting a filtration) as members of  $\mathbf{Top_{\bullet}^{R}}$; that is, we consider filtered topological spaces indexed by $\bR$.  For example, for a topological space $X$ and a real-valued function $f \colon X\to \bR$, the sublevelsets $f^{-1}((-\infty, k))= \{x\in X \ | \  f(x)<k \}$ can be assembled into a filtration $F_{X}\in \mathbf{Top_{\bullet}^{R}}$.
    For $k\in \bR$, we define $X_k \coloneqq F_X(k)\coloneqq f^{-1}((-\infty, k))$ and consider it as a topological space with the subspace topology.  For $k\leq l$, we define $X_{kl}$ to be the inclusion $X_k \hookrightarrow  X_l$, which is a continuous map.  We refer to such a filtration as the sublevelset filtration of $f$.
	
	Let \textbf{Gp} denote the category of groups and group homomorphisms between them.  Now, consider the homotopy group functor $\pi_{n}\colon\mathbf{Top_{\bullet}} \to \mathbf{Gp}$.  
	It induces a push-forward from $\mathbf{Top_{\bullet}^{R}}$ to $\mathbf{Gp^{R}}$ via post-composition.  
	\begin{definition}
		The category 
        of persistent groups is $\mathbf{Gp^{R}}$.
        Its objects are the functors $\mathbf{R}\to \mathbf{Gp}$, and the morphisms in 
        $\mathbf{Gp^{R}}(F,G)$ are the natural transformations $F\Rightarrow G$.
		The morphisms from $F$ to $G$ in 
        $\mathbf{Gp^{R}}$ admit a pointwise description as a collection of group homomorphisms 
		$\{\phi(k): F(k)\to G(k) \ | \ k\in \bR \}$ such that for any $k\leq l$ in $\bR$, we have the following commutative diagram (Figure \ref{cd}).
		\begin{figure}[hbt]
			\centering
			\begin{tikzcd}                                    
				F(k) \arrow[r, "F(k\to l)"] \arrow[d, "\phi(k)"]    
				& F(l) \arrow[d, "\phi(l)"] \\
				G(k) \arrow[r, "G(k\to l)"]
				& G(l)
			\end{tikzcd}
   \caption{Commutative diagram of group homomorphisms.}
		\label{cd}
		\end{figure}
		
	\end{definition}
	
	\subsection{The Van Kampen Theorem for Persistent Fundamental Groups}\label{2.2}
	
	Let $X$ be a topological space with a fixed basepoint $x_0$.
    Let $F_{X}\in \mathbf{Top_{\bullet}^{R}} $ be a filtration for $X$ with 
	$x_0 \in X_k$ for all $k\in \bR$.  
	
	We define the \textbf{$(k, l)$-persistent fundamental group} of $X$ with respect to the filtration $F_X$ to be the image of the group homomorphism $\pi_1 (X_{kl})\colon \pi_1 (X_{k}, x_0) \to \pi_1 (X_{l}, x_0) $, 
     induced by the inclusion $X_{kl}$.   Throughout the paper, for notational ease, we denote this image group by $\im \pi_1 (X_{kl})$. 
    Note that $\im \pi_1 (X_{kl})$ can be thought of as the fundamental group elements in $\pi_1 (X_{k}, x_0)$ that are still alive (i.e.,\ that persist) in $\pi_1 (X_{l}, x_0) $.
	
	Now, let us recall the Van Kampen theorem, which gives a method for computing the fundamental groups of spaces that can be decomposed into simpler subspaces (see \cite{hatcher} for a general version of the theorem and further details).      
	
	\begin{theorem} \label{vankampen}
		If $X=A \cup B $ with $A$, $B$, and $A\cap B$ open and path-connected, then the induced homomorphism 
		$$
		\Phi \colon  (\pi_1(A) \ast  \pi_1(B))/N \rightarrow \pi_1(X)
		$$ 
		is an isomorphism where $N$ is the normal subgroup generated by all elements of the form $\pi_1(A\cap B\ha A) (w)\cdot  \pi_1(A\cap B\ha B)(w)^{-1}$ for $w\in \pi_1(A\cap B)$; see the commutative diagram in Figure \ref{vk}.
	\end{theorem}

	\begin{figure}[hbt]
		\centering
		\begin{tikzcd}
			& \pi_1(A) \ar[dr, "\pi_1(A\ha X)"] 
			&
			&[1.5em] \\
			\pi_1(A \cap B) \ar[ur, "\pi_1(A\cap B\ha A)"] \ar[dr, "\pi_1(A\cap B\ha B)"']
			&
			& \pi_1(X) 
			&  \\
			& \pi_1(B) \ar[ur, "\pi_1(B\ha X)" ']
			&	
		\end{tikzcd}
		\caption{This diagram is commutative: $\displaystyle \pi_1(A\ha X) \pi_1(A\cap B\ha A) =  \pi_1(B\ha X) \pi_1(A\cap B\ha B)$.}
		\label{vk}
	\end{figure}

	In the remainder of this subsection, we show that the Van Kampen theorem is valid also for persistent fundamental groups. 
	
	\begin{remark}\label{basepoint}
	In this version of the Van Kampen theorem, we assume that the induced filtration for $A\cap B$ contains the chosen basepoint at each filtration level. 
	But even if one uses a filtration on a path-connected space $X$ constructed from finite samples from $X$, early filtration terms could be non-path-connected. The fundamental group could, at best, give information only on one path component of each term in the filtration. Using fundamental groupoids, it may be possible to capture information on many path components in each term of the filtration.  For this reason, after we give a proof of the Van Kampen theorem for persistent fundamental groups,  we then investigate fundamental groupoids.  
	\end{remark}

\begin{theorem}\label{persistentVK}
Let $X$ be a filtered topological space $F_X = \{X_k\}_{k \in \mathbb{R}}$ such that $X = A \cup B$, where $A$, $B$, and $A \cap B$ are 
open subsets containing the chosen basepoint $x_0$ at each filtration level. Assume that the filtration $F_X$ induces filtrations $F_A$, $F_B$, and $F_{A \cap B}$ on $A$, $B$, and $A \cap B$, respectively, defined by $A_k = X_k \cap A$, $B_k = X_k \cap B$, and $(A \cap B)_k = X_k \cap (A \cap B)$.  Furthermore, assume that for all $k\in \mathbb{R}$, the subspaces $A_k$, $B_k$, and $(A\cap B)_k$ are path-connected. 
Fix $k \leq l \in \mathbb{R}$. 
Let 
\[
\Phi_{kl} : \im \pi_1(A_{kl}) * \im \pi_1(B_{kl}) \longrightarrow \im \pi_1(X_{kl})
\]
be the group homomorphism induced by the inclusion maps. Then the kernel of $\Phi_{k,l}$ is the normal subgroup $N_{kl}$ generated by all elements of the form
\[ i^A_{kl}(w) \cdot i^B_{kl}(w)^{-1} \]
for $w \in \im \pi_1((A \cap B)_{kl})$.
Consequently, $\Phi_{kl}$ induces an isomorphism
\[
\im \pi_1(X_{kl}) \cong \left( \im \pi_1(A_{kl}) * \im \pi_1(B_{kl}) \right) \big/ N_{kl}.
\]
\end{theorem}

\begin{proof}
Let $Y \subseteq Z \subseteq X$ be subspaces, and for each $k \leq l \in \mathbb{R}$, let $\pi_1(Y_{kl})$ denote the homomorphism $\pi_1(Y_k) \to \pi_1(Y_l)$ induced by inclusion, and let $\pi_1(Y_k \hookrightarrow Z_k)$ denote the inclusion-induced map between fundamental groups at filtration level $k$.

From the classical Van Kampen theorem, for each $k \in \mathbb{R}$, the map
\[
\Phi_k : \pi_1(A_k) * \pi_1(B_k) \to \pi_1(X_k),
\]
induced by the inclusion maps $\pi_1(A_k \hookrightarrow X_k)$ and $\pi_1(B_k \hookrightarrow X_k)$, is surjective and yields an isomorphism
\[
\pi_1(X_k) \cong \left( \pi_1(A_k) * \pi_1(B_k) \right) / N_k,
\]
where $N_k$ is the normal subgroup generated by all elements of the form
\[
\pi_1((A \cap B)_k \hookrightarrow A_k)(w) \cdot \pi_1((A \cap B)_k \hookrightarrow B_k)(w)^{-1}, \quad \text{for } w \in \pi_1((A \cap B)_k).
\]

Let $k \leq l$.
Since we have the Van Kampen theorem at each filtration level for fundamental groups, we have the commutative diagram given in Figure~\ref{fig:firstdiagram}. 

\begin{figure}[htb]
                 \xymatrix@!C@C30pt @!R@R-20pt{
                 & & \pi_1 (A_k)\ar@/^/[dr]^{{\small \pi_1(A_k\ha X_k)}}  \ar@{-->}@<2ex>@/^1.7pc/[dddd]^(.2){{\small \pi_1(A_{kl})}}& \\
                 &  \pi_1 ((A\cap B)_k) \ar@/^/[ur]^(.45){{\small \pi_1((A\cap B)_k\ha A_k)}}  \ar@/_/[dr]_(.45){{\small \pi_1((A\cap B)_k\ha B_k)}}  \ar@{-->}[dddd]_{\pi_1((A\cap B)_{kl})} &   &  \pi_1 (X_k) \ar@{-->}[dddd]^{\pi_1(X_{kl})}\\
                 & & \pi_1 (B_k) \ar@/_/[ur]_{{\small \pi_1(B_k\ha X_k)}} \ar@{-->}@<-2ex>@/_1.65pc/[dddd]_(0.25){{\small \pi_1(B_{kl})}}& \\
                 &&  &\\
                 & & \pi_1 (A_l) \ar@/^/[dr]^{{\small \pi_1(A_l\ha X_l)}} & \\
                 &  \pi_1 ((A\cap B)_l)  \ar@/^/[ur]^(.45){{\small \pi_1((A\cap B)_l\ha A_l)}}  \ar@/_/[dr]_(.4){{\small \pi_1((A\cap B)_l\ha B_l)}}  &   &  \pi_1 (X_l) \\
                 & & \pi_1 (B_l)  \ar@/_/[ur]_{{\small \pi_1(B_l\ha X_l)}} &  }
                 \caption{Fundamental group homomorphisms between levels $k$ and $l$.}
                 \label{fig:firstdiagram}
                  \end{figure}

Now we consider the restrictions of the homomorphisms $\pi_1((A \cap B)_l) \to \pi_1(A_l)$ and $\pi_1((A \cap B)_l) \to \pi_1(B_l)$ to the image of $\pi_1((A \cap B)_{kl})$.
Similarly, we consider the restriction of the homomorphism $\pi_1(A_l)\to \pi_1(X_l)$ to the image of $\pi_1(A_{kl})$, and the restriction of the homomorphism $\pi_1(B_l)\to \pi_1(X_l)$ to the image of $\pi_1(B_{kl})$.
We denote these restriction homomorphisms by $i^A_{kl}$, $i^B_{kl}$, $j^A_{kl}$, $j^B_{kl}$, as drawn in the commutative diagram given in Figure~\ref{pvk}. 

\begin{figure}[hbt]
		\centering
		\begin{tikzcd}
			& \im \pi_1(A_{kl}) \ar[dr, "j^A_{kl}"] 
			&
			&[1.5em] \\
			\im \pi_1((A \cap B)_{kl}) \ar[ur, "i^A_{kl}"] \ar[dr, "i^B_{kl}"']
			&
			& \im \pi_1 (X_{kl}) 
			&  \\
			& \im \pi_1(B_{kl}) \ar[ur, "j^B_{kl}" ']
			&	
		\end{tikzcd}
		\caption{This diagram is commutative: $\displaystyle j^A_{kl}\circ i^A_{kl} =  j^B_{kl}\circ i^B_{kl}$.}
		\label{pvk}
	\end{figure}

These induce a homomorphism
\[
\Phi_{kl} : \im \pi_1(A_{kl}) * \im \pi_1(B_{kl}) \longrightarrow \im \pi_1(X_{kl}).
\]

We now prove that $\Phi_{kl}$ is surjective and that its kernel is the normal subgroup $N_{kl}$ generated by all elements of the form
\[i^A_{kl}(w) \cdot i^B_{kl}(w)^{-1}\] 
for $ w \in \im \pi_1((A \cap B)_{kl})$.

\textbf{Surjectivity:} Let $x \in \im \pi_1(X_{kl})$. Then there exists $x_k \in \pi_1(X_k)$ such that $\pi_1(X_{kl})(x_k) = x$. Since $\Phi_k$ is surjective, there exist elements $a_k^{(i)} \in \pi_1(A_k)$ and $b_k^{(i)} \in \pi_1(B_k)$ such that:
\[
x_k = \Phi_k(\Pi_{i=1}^n(a_k^{(i)} b_k^{(i)})).
\]
Naturality of the inclusion maps (see Figure~\ref{fig:firstdiagram}) gives:
\begin{align*}
\pi_1(X_{kl})(\Phi_k(\Pi_{i=1}^n(a_k^{(i)} b_k^{(i)}))) &= \Phi_l(\Pi_{i=1}^n (\pi_1(A_{kl})(a_k^{(i)}) \cdot \pi_1(B_{kl})(b_k^{(i)}))) \\
&= \Phi_{kl}(\Pi_{i=1}^n(a_l^{(i)} b_l^{(i)})),
\end{align*}
where $a_l^{(i)} = \pi_1(A_{kl})(a_k^{(i)})$ and $b_l^{(i)} = \pi_1(B_{kl})(b_k^{(i)})$.
Thus, $x$ lies in the image of $\Phi_{kl}$.

\textbf{Kernel:} Define $N_{kl}$ to be the normal subgroup of $\im \pi_1(A_{kl}) * \im \pi_1(B_{kl})$ generated by
\[
i^A_{kl}(w) \cdot i^B_{kl}(w)^{-1}\]
for $ w \in \im \pi_1((A \cap B)_{kl})$.
By Figure \ref{pvk}, we have
\[\displaystyle j^A_{kl}\circ i^A_{kl} =  j^B_{kl}\circ i^B_{kl}.\]
So these relations are precisely those that are killed in $\Phi_{kl}$.
Thus, $\ker \Phi_{kl} = N_{kl}$.

\end{proof}

	\subsection{The Van Kampen Theorem for Persistent Fundamental Groupoids} \label{groupoid}
	Let $X$ be a topological space and $p$ and $q$ be a pair of points in $X$.
         The fundamental groupoid $\Pi_1(X)$ of $X$ is a category whose objects are the points of $X$, and whose morphisms from $p$ to $q$ are the homotopy classes of paths in $X$
         from $p$ to $q$ (relative to endpoints).
         These morphisms are denoted Mor($p, q$).
         The classical fundamental group $\pi_1(X,x_0)$ at the basepoint $x_0$, as discussed in the previous subsection, is Mor($x_0, x_0$) in $\Pi_1(X)$.
         
         \begin{theorem}  [Van Kampen theorem for the fundamental groupoid]
         Let $X$ be a topological space and $A$ and $B$ be two open subsets of $X$ such that $X=A\cup B$. Then, the following diagram, in which all morphisms are induced by inclusions of spaces, is a pushout square of groupoids:
         \begin{figure}[h]
			\centering
			\begin{tikzcd}      
				\Pi_1(A\cap B) \arrow[r] \arrow[d]    
				& \Pi_1(A) \arrow[d] \\
				\Pi_1(B) \arrow[r]
				& \Pi_1(X)
			\end{tikzcd}
            \caption{Pushout square of fundamental groupoids.}
		\end{figure}
         \end{theorem}
         
         We refer the reader to \cite{brown} for the proof and more details on fundamental groupoids.
         
         We define the \textbf{$(k, l)$-persistent fundamental groupoid} of $X$ with respect to the filtration $F_{X}$ to be the image of the groupoid homomorphism $\Pi(X_{kl}):\Pi(X_k)\to \Pi(X_l)$ induced by the inclusion of $X_k$ into $X_l$.  Throughout this subsection, we denote this persistent fundamental groupoid by $\im \Pi(X_{kl})$.
         We emphasize that we do not 
	choose a basepoint in this setup.
	
Since we have the Van Kampen theorem at each filtration level for fundamental groupoids, we have the commutative diagram in Figure \ref{fig:groupoid}, where the top and bottom squares are pushouts.
	
	   \begin{figure}[htb]
                 \xymatrix@!C@C30pt @!R@R-20pt{
                 & & \Pi_1 (A_k)\ar@/^/[dr]^{{\small \Pi_1(A_k\ha X_k)}}  \ar@{-->}@<2ex>@/^1.7pc/[dddd]^(.2){{\small \Pi_1(A_{kl})}}& \\
                 &  \Pi_1 ((A\cap B)_k) \ar@/^/[ur]^(.45){{\small \Pi_1({(A\cap B)}_k\ha A_k)}}  \ar@/_/[dr]_(.45){{\small \Pi_1({(A\cap B)}_k\ha B_k)}}  \ar@{-->}[dddd]_{\Pi_1((A\cap B)_{kl})} &   &  \Pi_1 (X_k) \ar@{-->}[dddd]^{\Pi_1(X_{kl})}\\
                 & & \Pi_1 (B_k) \ar@/_/[ur]_{{\small \Pi_1(B_k\ha X_k)}} \ar@{-->}@<-2ex>@/_1.65pc/[dddd]_(0.25){{\small \Pi_1(B_{kl})}}& \\
                 && &\\
                 & & \Pi_1 (A_l) \ar@/^/[dr]^{{\small \Pi_1(A_l\ha X_l)}} & \\
                 &  \Pi_1 ((A\cap B)_l)  \ar@/^/[ur]^(.45){{\small \Pi_1({(A\cap B)}_l\ha A_l)}}  \ar@/_/[dr]_(.4){{\small \Pi_1({(A\cap B)}_l\ha B_l)}}  &   &  \Pi_1 (X_l) \\
                 & & \Pi_1 (B_l)  \ar@/_/[ur]_{{\small \Pi_1(B_l\ha X_l)}} &  }
                 \caption{The fundamental groupoid morphisms between levels $k$ and $l$.}
                 \label{fig:groupoid}
                  \end{figure}

    As remarked earlier, it is certainly possible that a filtration of a space need not be path-connected at each stage in the filtration.
    Using fundamental groupoids, it may be possible to capture information on many path components in each term of the filtration.

Section~\ref{2.2} establishes that the classical Van Kampen theorem, a cornerstone of algebraic topology, extends naturally to the persistent setting.
By decomposing a filtered topological space \( X = A \cup B \) into open, path-connected subspaces \( A \), \( B \), and \( A \cap B \), the persistent fundamental group of \( X \) can be computed as a quotient of the free product of the persistent fundamental groups of \( A \) and \( B \), modulo relations arising from their intersection.
This result, formalized in Theorem~\ref{persistentVK}, enables localized computations of persistent homotopy groups by breaking down complex spaces into simpler components.
Fundamental groupoids may furthermore be useful in capturing information across multiple path components.

The key takeaway is that persistent homotopy inherits the decompositional power of the Van Kampen theorem, offering a systematic framework for analyzing filtered spaces, an advancement for applications in topological data analysis where such decompositions are often necessary.

\section{Interleaving Distance and the Van Kampen Theorem}
\label{3}

In this section, following  \cite{chazal} and \cite{bubenik-scott}, we define the interleaving distance between persistent homotopy groups.
Then, we look at the relationship between the Van Kampen theorem 
	and the interleaving distance.
	
	Let $F, G \in 
    \mathbf{Gp^{R}}$
    be two functors.  We say that $F$ and $G$ are $\delta$-\textit{interleaved} for some $\delta \geq 0$ if there exist two families of morphisms 
	$\{\phi(k): F(k)\to G(k+\delta) \ | \ k\in \bR\}$ and $\{\varphi(k): G(k)\to F(k+\delta) \ | \ k\in \bR\}$ such that the diagrams in Figure \ref{G_u} commute for all $k\leq l$ (see \cite{bubenik-scott} 
	for a more general definition and details). 
	
	\begin{figure}[hbt]
		\centering
		\begin{tikzcd}                                    
			F(k) \arrow[rr, "F(k\to l)"] \arrow[d, "\phi(k)"]    
			&& F(l) \arrow[d, "\phi(l)"] \\
			G(k + \delta) \arrow[rr, "G(k+\delta\to l+\delta)" ]
			&& G(l + \delta)
		\end{tikzcd}
		\begin{tikzcd}
			G(k) \arrow[rr, "G(k\to l)"] \arrow[ "\varphi(k)", d]    
			&& G(l) \arrow[d, "\varphi(l)"] \\
			F(k + \delta) \arrow[rr, "F(k+\delta\to l+\delta)"]
			&& F(l + \delta)
		\end{tikzcd}
		
		\vspace*{5mm}
		
		\begin{tikzcd}[column sep=.5em]
			& G(k+ \delta) \arrow[dr,"\varphi(k+\delta)"] \\
			F(k) \arrow[ur,"\phi(k)"] \arrow[rr, "F(k\to k+2\delta)"] && F(k+ 2\delta)
		\end{tikzcd}	
		\begin{tikzcd}[column sep=.5em]
			& F(k+ \delta) \arrow[dr,"\phi(k+\delta)"] \\
			G(k) \arrow[ur,"\varphi(k)"] \arrow[rr, "G(k\to k+2\delta)"] && G(k+ 2\delta)
		\end{tikzcd}
		\caption{Commutative diagrams for the $\delta$-interleaved functors $F$ and $G$.}
		\label{G_u}
	\end{figure}
	
	This induces the following extended pseudometric (see \cite[Theorem 3.3]{bubenik-scott}), the \textit{interleaving distance} between 
	$F$ and $G$, which is defined as 	
	$$
	d_I(F, G)=\textrm{inf} \{\delta \geq 0 \ | \ F \ \textrm{and} \ G \ \textrm{are} \ \delta\textrm{-interleaved} \}.
	$$
	\noindent
	We set $d_I(F, G)=\infty$ if $F$ and $G$ are not $\delta$-interleaved for any  $\delta \geq 0$.
	
	We say that the functors $F$ and $G$ are \emph{isomorphic} if there is a family of  isomorphisms $\{\phi(k): F(k)\to G(k) \ | \ k\in \bR\}$ 
	in the commutative diagram in Figure \ref{fig:iso} for all $k\le l$.
	\begin{figure}[h]\centering
		\begin{tikzcd}                                    
			F(k) \arrow[rr, "F(k\to l)"] \arrow[ "\phi(k)", d]    
			&& F(l) \arrow[d, "\phi(l)"] \\
			G(k ) \arrow[rr, "G(k\to l)"]
			&& G(l)
		\end{tikzcd}
        \caption{Commutative diagram for isomorphic functors $F$ and $G$.}
        \label{fig:iso}
	\end{figure}
		
	The interleaving distance $d_I$ defined above is indeed an extended pseudometric since it can take the value $\infty$, and since $d_I(F, G) = 0$ does not imply that 
	$F$ and $G$ are isomorphic.  But, if we identify functors whose interleaving distance is $0$, then $d_I$ is an extended
	metric on this set of equivalence classes (see \cite[Section~3]{bubenik-scott}).

	Let $X, X'\in \mathbf{Top_{\bullet}}$ with basepoints $x_0, x'_0$, respectively.  Let  $F_{X}, G_{X'}\in \mathbf{Top_{\bullet}^{R}}$ be  filtrations for these spaces.
Assume that  $X=A\cup B$ and $X'=A'\cup B'$  are covered by open and path-connected subsets such that $x_0 \in A\cap B$ and $x'_0 \in A'\cap B'$. 
    Since we have assumed that $x_0 \in X_k$ and $x'_0 \in X'_k$ for all $k\in \bR$, this implies that $x_0$ is in $A_k$, $B_k$, and $(A\cap B)_k$, and $x'_0$ is in $A'_k$, $B'_k$, and $(A'\cap B')_k$ for all $k \in \bR$.

\begin{remark}
In Theorem~\ref{theorem:iso}, Theorem~\ref{thm:interleaving}, and Corollary~\ref{cor:interleaving}, we assume that for each filtration level $k \in \mathbb{R}$, the subspaces $A_k, B_k$, $(A \cap B)_k$ and $A_k', B_k'$, $(A' \cap B')_k$ are all path-connected.
This ensures the well-definedness of the interleaving maps and the normal subgroups $N_k$ and $N_k'$ in the Van Kampen quotients.
These assumptions are needed for the persistence functors $\pi_1 F_A$, $\pi_1 F_B$, etc., to be meaningful at each level.
\end{remark}

 
    
	
For any $k\in \bR$, we have the following isomorphism coming from the Van Kampen theorem at level $k$ of our filtrations: 
	$$
	\displaystyle  \pi_1(X_k)\cong  (\pi_1(A_k)\ast \pi_1(B_k)) /N_k. 
	$$
	\noindent
	Here, $N_k$ is the normal subgroup generated by all the elements of the form 
	$$
	\pi_1({(A\cap B)}_k \ha A_k)(w)\cdot\pi_1({(A\cap B)}_k \ha B_k)(w)^{-1} 
	$$
	\noindent 
	for $w\in \pi_1((A\cap B)_k)$.  Let us define a functor $\displaystyle (\pi_1F_A*\pi_1F_B)/N \in \mathbf{Gp^{R}}$
    by 
	$$
	((\pi_1F_A*\pi_1F_B)/N )(k)=(\pi_1(A_k)\ast \pi_1(B_k))/N_k.
	$$
	By utilizing the isomorphism mentioned above, it can be concluded that: 
	
	\begin{theorem}
	\label{theorem:iso}
		The functors $\pi_1F_{X}$ and $(\pi_1F_A*\pi_1F_B)/N $ are isomorphic.
	\end{theorem}
	\begin{proof}
		Left to the reader. 
	\end{proof}
	
	
	
	We, of course, also have that $\pi_1G_{X'}$ and $(\pi_1G_{A'}*\pi_1G_{B'})/N' $ are isomorphic, where all of these terms are defined similarly.
	
	In the following, we show that if the persistent fundamental group functors $\pi_1F_A$ and $\pi_1G_{A'}$ are $\delta$-interleaved, if $\pi_1F_{B}$ and $\pi_1G_{B'}$ are $\delta$-interleaved, and if $\pi_1F_{A\cap B}$ and $\pi_1G_{A'\cap B'}$ are $\delta$-interleaved, then the persistent fundamental group functors $(\pi_1F_A*\pi_1F_B)/N $ and $(\pi_1G_{A'}*\pi_1G_{B'})/N' $ are also $\delta$-interleaved. 
	Using the isomorphisms from Theorem~\ref{theorem:iso}, this will then give that $\pi_1F_{X}$ and $\pi_1G_{X'}$ are $\delta$-interleaved.
	
		If the functors $\pi_1F_{A}$ and $\pi_1G_{A'}$ are $\delta$-interleaved, then for each $k\in \bR$ there exist two families of 
	homomorphisms $\{m(k) \colon \pi_{1}(A_k)\to \pi_{1}(A'_{k+\delta}) \}$ and $\{n(k)\colon \pi_{1}(A'_k)\to \pi_{1}(A_{k+\delta}) \}$ 
	such that the diagrams in Figure \ref{A_u2} commute.
	
	\begin{figure}[hbt]
	\centering
		\begin{tikzcd}            
			\pi_1(A_k) \arrow[rr , "\pi_1(A_k\ha A_l)" ] \arrow["m(k)" , d]    
			&&\pi_1(A_l) \arrow[d, "m(l)"] \\
			\pi_1(A'_{k+ \delta}) \arrow[rr,  "\pi_1(A'_{k+ \delta}\ha A'_{l+ \delta})" ]
			&&\pi_1(A'_{l+ \delta})
		\end{tikzcd}
		\begin{tikzcd}       
			\pi_1(A'_k) \arrow[rr , "\pi_1(A'_k\ha A'_l)" ] \arrow["n(k)" , d]    
			&&\pi_1(A'_l) \arrow[d, "n(l)"] \\
			\pi_1(A_{k+ \delta}) \arrow[rr , "\pi_1(A_{k+\delta}\ha A_{l+\delta})" ]
			&&\pi_1(A_{l+ \delta})
		\end{tikzcd}
		
  \vspace*{7mm}
		
		\begin{tikzcd}[column sep=.5em]
			& \pi_1(A'_{k+ \delta})  \arrow[dr,"n({k+\delta})"] \\
			\pi_1(A_{k})  \arrow[ur,"m(k)"] \arrow[rr
			, "\pi_1(A_k\ha A_{k+2\delta})"] && \pi_1(A_{k+ 2\delta}) 
		\end{tikzcd}	
		\begin{tikzcd}[column sep=.5em]
			& \pi_1(A_{k+ \delta})  \arrow[dr,"m({k+\delta})"] \\
			\pi_1(A'_{k})  \arrow[ur,"n(k)"] \arrow[rr,"\pi_1(A'_k\ha A'_{k+2\delta})"] && \pi_1(A'_{k+ 2\delta}) 
		\end{tikzcd}
		\caption{Commutative diagrams of $\delta$-interleaved functors $\pi_1F_A$ and $\pi_1G_{A'}$ (analogous diagrams exist for $\pi_1F_B$ and $\pi_1G_{B'}$).}
		\label{A_u2}
	\end{figure} 
	
	Also if the functors $\pi_1F_{B}$ and $\pi_1G_{B'}$ are $\delta$-interleaved, then there exists two families of homomorphisms 
	$\{s(k)\colon \pi_{1}(B_k)\to \pi_{1}(B'_{k+\delta}) \}$ and $\{u(k)\colon \pi_{1}(B'_{k})\to \pi_{1}(B_{k+\delta}) \}$ such that analagous diagrams to the ones in Figure \ref{A_u2} (with $A$ replaced by $B$, $n$ replaced by $s$, and $m$ replaced by $u$) commute.

 We also have similar families of interleving homomorphisms between $\pi_1((A\cap B)_k)$ and  $\pi_1((A'\cap B')_{k+\delta})$, and between $\pi_1((A'\cap B')_k)$ and $\pi_1((A\cap B)_{k+\delta})$.
 We suppress the notation for these homomorphisms to make our arguments easier to follow.

	For each $k\in \bR$, we can define a homomorphism  
	$$
	p(k) \colon (\pi_1(A_k)\ast \pi_1(B_k))/N_k \to (\pi_1(A'_{k+\delta})\ast \pi_1(B'_{k+\delta}))/N'_{k+\delta}
	$$ 
	\noindent 
	by  $p(k)((\Pi_{i=1}^n(a_i\cdot b_i))N_k)=(\Pi_{i=1}^n(m({k})(a_i)\cdot s({k})(b_i)))N'_{k+\delta}$, where the maps between $N_k$ and $N'_{k+\delta}$ are defined through the homomorphisms between $\pi_1((A\cap B)_k)$ and $\pi_1((A'\cap B')_{k+\delta})$.

	Similarly, for each $k\in \bR$, we can define another homomorphism 
	$$ 
	q(k) \colon (\pi_1(A'_{k})\ast \pi_1(B'_{k}))/N'_{k} \to (\pi_1(A_{k+\delta})\ast \pi_1(B_{k+\delta}))/N_{k+\delta}
	$$
	\noindent
	by $q(k)((\Pi_{i=1}^n(a'_i\cdot b'_i))N'_k)=(\Pi_{i=1}^n(n(k)(a'_i)\cdot u(k)(b'_i)))N_{k+\delta}$.

We remark that to have well-defined interleavings between the quotient spaces, we need to use the interleavings between intersection spaces.
The reason is that for $w\in \pi_1((A\cap B)_k)$, the elements $m(w)$ and $s(w)$ need not be equal to each other in $\pi_{1}(A'_{k+\delta})$ and $ \pi_{1}(B'_{k+\delta})$, but we do have that $m(w)\cdot s(w)^{-1}\in N'_{k+\delta}$.


		
	To prove that the functors $(\pi_1F_A*\pi_1F_B)/N$ and $(\pi_1G_{A'}*\pi_1G_{B'})/N'$ are $\delta$-interleaved, we need to show that the families of homomorphisms $p$ and $q$ form the commutative 
	diagrams in Figure \ref{A*B}.  We only show that the topmost diagram is commutative; the commutativity of the remaining diagrams can be checked similarly.
    \begin{figure}[ht]
		\centering
		\begin{tikzcd} 
			(\pi_1(A_k)*\pi_1(B_k))/ N_k \arrow[rrrr, "\pi_1 (A_k*B_k \to A_l*B_l)" ] \arrow["p(k)" , d]    
			&&&&(\pi_1(A_l)*\pi_1(B_l))/ N_l \arrow[d, "p(l)"] \\
			(\pi_1(A'_{k+\delta})*\pi_1(B'_{k+\delta}))/ N'_{k+\delta}  \arrow[rrrr, "\pi_1 (A'_{k+\delta}*B'_{k+\delta} \to A'_{l+\delta}*B'_{l+\delta})"]
			&&&& (\pi_1(A'_{l+\delta})*\pi_1(B'_{l+\delta}))/ N'_{l+\delta}
		\end{tikzcd}
		\vspace*{10mm}
		
		\begin{tikzcd}                               (\pi_1(A'_{k})*\pi_1(B'_{k}))/ N'_{k} \arrow[rrrr , "\pi_1 (A'_{k}*B'_{k} \to A'_{l}*B'_{l}))" ] \arrow["q(k)" , d]    
			&&&&(\pi_1(A'_{l})*\pi_1(B'_{l}))/ N'_{l} \arrow[d, "q(l)"] \\
			(\pi_1(A_{k+\delta})*\pi_1(B_{k+\delta}))/ N_{k+\delta} \arrow[rrrr , "\pi_1 (A_{k+\delta}*B_{k+\delta} \to A_{l+\delta}*B_{l+\delta})" ]
			&&&&(\pi_1(A_{l+\delta})*\pi_1(B_{l+\delta}))/ N_{l+\delta} 
		\end{tikzcd}
		\vspace*{10mm}
		
		\begin{tikzcd}[column sep=.3em]
			& (\pi_1(A'_{k+\delta})*\pi_1(B'_{k+\delta}))/ N'_{k+\delta}  \arrow[dr,"q({k+\delta})"] \\
			(\pi_1(A_{k})*\pi_1(B_{k}))/ N_{k}   \arrow[ur,"p(k)"] \arrow[rr
			, "\pi_1 (A_{k}*B_{k} \to A_{k+2\delta}*B_{k+2\delta})"] && (\pi_1(A_{k+2\delta})*\pi_1(B_{k+2\delta}))/ N_{k+2\delta} 
		\end{tikzcd}	
		\vspace*{10mm}
		
		\begin{tikzcd}[column sep=.3em]
			& (\pi_1(A_{k+\delta})*\pi_1(B_{k+\delta}))/ N_{k+\delta}  \arrow[dr,"p({k+\delta})"] \\
			(\pi_1(A'_{k})*\pi_1(B'_{k}))/ N'_{k}  \arrow[ur,"q(k)"] \arrow[rr
			, "\pi_1 (A'_{k}*B'_{k} \to A'_{k+2\delta}*B'_{k+2\delta})"] && (\pi_1(A'_{k+2\delta})*\pi_1(B'_{k+2\delta}))/ N'_{k+2\delta} 
		\end{tikzcd}
		\caption{$\delta$-interleaving for  the functors $(\pi_1F_A*\pi_1F_B)/N $ and $(\pi_1G_{A'}*\pi_1G_{B'})/N'$. }
		\label{A*B}
	\end{figure}
		
		
		

	For the topmost diagram in Figure \ref{A*B} to be commutative, we must check that
	\begin{equation}
    \label{eq:commutativity}
	p(l) \circ \pi_1 (A_k*B_k \to A_l*B_l) = \pi_1 (A'_{k+\delta}*B'_{k+\delta} \to A'_{l+\delta}*B'_{l+\delta}) \circ p(k).
	\end{equation}
	Let $(\Pi_{i=1}^n(a_i\cdot b_i))N_k\in (\pi_1(A_k)*\pi_1(B_k))/ N_k$.  Then 
	\begin{align}
		&p(l)\big(\pi_1 (A_k*B_k \to A_l*B_l)((\Pi_{i=1}^n(a_i\cdot b_i))N_k)\big)\nonumber\\
		=\,&p(l)\big( \Pi_{i=1}^n (\pi_1(A_k\ha A_l)(a_i)\cdot \pi_1(B_k\ha B_l)(b_i))N_l \big)\nonumber\\
		=\,&\big(\Pi_{i=1}^n (m(l)(\pi_1(A_k\ha A_l)(a_i)) \cdot s(l)(\pi_1(B_k\ha B_l)(b_i)))\big)N'_{l+\delta}\nonumber 
	\end{align}
	and 
	\begin{align}
		&\pi_1 (A'_{k+\delta}*B'_{k+\delta} \to A'_{l+\delta}*B'_{l+\delta})\big(p(k)((\Pi_{i=1}^n(a_i\cdot b_i))N_k)\big) \nonumber\\
		=\,&\pi_1 (A'_{k+\delta}*B'_{k+\delta} \to A'_{l+\delta}*B'_{l+\delta})\big((\Pi_{i=1}^n(m(k)(a_i)\cdot s(k)(b_i)))N'_{k+\delta}\big)\nonumber\\
		=\,&\big(\Pi_{i=1}^n(\pi_1(A'_{k+\delta}\ha A'_{l+\delta})(m(k)(a_i))\cdot \pi_1(B'_{k+\delta}\ha B'_{l+\delta})(s(k)(b_i)))\big)N'_{l+\delta}\nonumber 	
	\end{align}	
	
	We have both of the equalities $m(l)  \circ  \pi_1(A_k\ha A_l) = \pi_1(A'_{k+\delta}\ha A'_{l+\delta}) \circ m(k)$  and $s(l) \circ \pi_1(B_k\ha B_l) = \pi_1(B'_{k+\delta}\ha B'_{l+\delta}) \circ  s(k)$, giving equality in~\eqref{eq:commutativity}. 
	Thus, the topmost diagram in Figure \ref{A*B} is commutative.
	The rest follow similarly, and so we have the following theorem.

	\begin{theorem}
	\label{thm:interleaving}
		Let $X$ and $X'$ be based topological spaces decomposed as a union of path-connected open subsets $A$ and $B$ and 
		$A'$ and $B'$, respectively.
		If $\pi_1F_{A}$ and $\pi_1G_{A'}$ are $\delta$-interleaved, if $\pi_1F_{B}$ and $\pi_1G_{B'}$ are $\delta$-interleaved, and if $\pi_1F_{A\cap B}$ and $\pi_1G_{A'\cap B'}$ are $\delta$-interleaved, then 
		$(\pi_1F_A*\pi_1F_B)/N$ and $(\pi_1G_{A'}*\pi_1G_{B'})/N'$ are also $\delta$-interleaved.
	\end{theorem}

	
	Note that if $F$ and $G$ are $\delta$-interleaved, then they are also $\epsilon$-interleaved for any $\epsilon \geq \delta$. 
	Under the conditions of the above theorem, we have the following result.
	
	\begin{corollary}
	\label{cor:interleaving}
		The interleaving distance between the persistence fundamental group functors $\pi_1F_{X}$ and $\pi_1G_{X'}$  satisfies 
		$$
		d_I(\pi_1F_{X}, \pi_1G_{X'}) \leq \textrm{max}~ \{( d_I(\pi_1F_{A}, \pi_1G_{A'}), d_I(\pi_1F_{B}, \pi_1G_{B'}), d_I(\pi_1F_{A\cap B}, \pi_1G_{A'\cap B'}) \}. 
		$$		
	\end{corollary}

\section{Excision and Hurewicz Theorems for Persistent Homotopy Groups}
 \label{4}
	
	The excision property, which enables one to relate homology groups of a pair to that of a pair of subspaces, is one of the main reasons why homology can often be effectively calculated.   
	Homotopy groups, conversely, do not satisfy excision, which is why they are generally much harder to calculate.  However, depending on connectivity, there is a specific dimension range in which excision holds for homotopy groups (for definitions and homotopy theoretical properties, we again refer the reader to \cite{hatcher}).  This section shows that persistent homotopy benefits from the excision and Hurewicz theorems.  We also obtain the Freudenthal suspension theorem for persistent homotopy groups as a result of the excision theorem.

    Let $X$ be a topological space with a fixed basepoint $x_0 \in X$.
    We recall that $\pi_n(X,x_0)$ is the homotopy classes of maps $S^n\to X$ that fix the basepoint.
    Similarly, for $X$ a space with $x_0 \in A \subseteq X$, we recall that $\pi_n(X,A,x_0)$ is the homotopy classes of maps of pairs $(D^n,S^{n-1}) \to (X,A)$ that fix the basepoint.
	
    Let $F_{X}$ be a filtration for $X$, i.e., $F_{X}\in \mathbf{Top_{\bullet}^{R}} $ and 
	$x_0 \in X_k$ for all $k\in \bR$. Recall we define the \textbf{$(k, l)$-persistent homotopy group} of $X$ with respect to the filtration $F_{X}$, denoted $\im \pi_n(X_{kl})$, to be the image of the group homomorphism 
	$\pi_n(X_{kl})\colon \pi_n(X_k, x_0)\to \pi_n(X_l,x_0)$ induced by the inclusion of $X_k$ into $X_l$ (cf.~\cite{letscher}).  
	
	Let us also assume that $X$ is a CW complex decomposed as the union of subcomplexes $A$ and $B$ with the intersection $C=A\cap B$ connected and $x_0\in C$.
    As in the previous section, the filtration $F_{X}$
	induces  filtrations on $A$, $B$ and $C$ which we denote by $F_{A}$, $F_{B}$ and $F_{C}$, respectively.
    We assume that $x_0\in C_k$ for all $k\in \mathbb{R}$. 
    Moreover, $F_{X}$ induces filtrations for the relative pairs $(A, C)$, $(B,C)$ and $(X, B)$ which we denote by the functors $F_{(A, C)}$, $F_{(B, C)}$ and $F_{(X, B)}$, respectively, such that $F_{(A,C)}(k):={(A,C)}_k=(A_k,C_k)$, $F_{(B,C)}(k):={(B,C)}_k=(B_k,C_k)$ and $F_{(X,B)}(k):={(X,B)}_k=(X_k,B_k)$ for each $k\in \bR$.

	Similarly, we define the \textbf{$(k, l)$-persistent homotopy group} of a relative pair $(A, C)$ concerning a chosen filtration as the image of the group homomorphism $\pi_n({(A,C)}_{kl}): \pi_n({(A,C)}_k)\to \pi_n({(A,C)}_l)$ 
    induced by the inclusion of ${(A,C)}_k$ into ${(A,C)}_l$. 
    We denote this group by $\im \pi_n({(A,C)}_{kl})$.    

	Recall that a space $X$ is said to be \textbf{$n$-connected} if $\pi_k(X)=0$ for $k\leq n$.  Similarly, the pair $(X, A)$ is called \textbf{$n$-connected}                  if $\pi_k(X, A)=0$ for $k\leq n$.
	Let us also recall the excision theorem for homotopy groups.
	
    \begin{theorem}[Excision]
    \label{excision}
	If the pair $(A,C)$ is $m$-connected and the pair $(B,C)$ is $n$-connected, for $m,n \geq 0$, with $C$ connected and nonempty, then the map $\pi_u(A,C)\to \pi_u(X,B)$ 
	induced by inclusion is an isomorphism for $u< m+n$ and a surjection for $u=m+n$.
	\end{theorem}
		
	Next, we state and prove an excision theorem for persistent homotopy groups:  In the remainder of this section let us fix two filtration levels $k$ and $l$ with $k<l$ and assume that the spaces $A_k$, $B_k$, $X_k$, $A_l$, $B_l$, $X_l$ are subcomplexes of $X$.  
	Suppose that the relative pair ${(A, C)}_k$ is $m_1$-connected, ${(B, C)}_k$ is $n_1$-connected, ${(A, C)}_l$ is $m_2$-connected, and ${(B, C)}_l$ is $n_2$-connected.  
	Suppose also that $C_k$ and $C_l$ are non-empty and connected.  By Theorem \ref{excision}, we have  the following isomorphisms induced by inclusions:
	\begin{align*}
	f_k &\colon \pi_u({(A,C)}_k)  \to \pi_u({(X,B)}_k) &&\text{ for }u< m_1+n_1\text{, and} \\
	f_l &\colon \pi_u({(A,C)}_l) \to \pi_u({(X,B)}_l) &&\text{ for }u< m_2+n_2.
	\end{align*} 
	Note that both $f_k$ and $f_l$ are isomorphisms for $u< \textrm{min}  \{m_1+n_1, m_2+n_2\}$.
	By naturality, we have the following commutative diagram: 
	\begin{figure}[hbt]
		\centering
		\begin{tikzcd}                             
			\pi_u((A,C)_k) \arrow[rr , "\pi_u ({(A,C)}_{kl})" ] \arrow["f_k " , d]    
			&&\pi_u((A,C)_l) \arrow[d, "f_l"] \\
			\pi_u((X,B)_k) \arrow[rr , "\pi_u ({(X,B)}_{kl})" ]
			&&\pi_u((X,B)_l)
		\end{tikzcd}
		\caption{Commutative diagram between levels $k$ and $l$.}
		\label{A}
	\end{figure}

	We state the following \textit{excision theorem for persistent homotopy groups}, which, under certain connectivity conditions, allows us to work with a pair of smaller spaces,
	$(A, C)$, and get information about a pair of bigger spaces, $(X, B)$.

	\begin{theorem}\label{Exc}
	 For  fixed  filtration levels $k$ and $l$ with $k<l$, suppose that the relative pair ${(A, C)}_k$ is $m_1$-connected, ${(B, C)}_k$ is $n_1$-connected, 
         ${(A, C)}_l$ is $m_2$-connected, and ${(B, C)}_l$ is  $n_2$-connected.  Suppose also that $C_k$ and $C_l$ are non-empty and connected.
		Let $\alpha \colon \im \pi_u({(A,C)}_{kl}) \to \im \pi_u({(X,B)}_{kl})$ be the map defined by 
        $\alpha(a)=f_l(a)$.
		Then $\alpha$ is an isomorphism for $u< \textrm{min} \{m_1+n_1, m_2+n_2\}$ and a surjection for  $u=\textrm{min} \{m_1+n_1, m_2+n_2\}$.   
	\end{theorem}
	
	\begin{proof}
    Note that $\alpha$ is a homomorphism since $f_l$ is.
    By the commutativity of the diagram in Figure \ref{A}, the map $\alpha \colon \im \pi_u({(A,C)}_{kl}) \to \im \pi_u({(X,B)}_{kl})$ could equivalently be defined by 
		$
		\alpha(a)=\pi_u ({(X,B)}_{kl})(f_k(b))
		$
		\noindent 
		for any $b\in \pi_u({(A, C)}_k)$ satisfying $\pi_u ({(A,C)}_{kl})(b)=a$.
    
	We will prove that $\alpha$ is injective for $u< \textrm{min} \{m_1+n_1, m_2+n_2\}$ and surjective for $u\le \textrm{min} \{m_1+n_1, m_2+n_2\}$.

		\textbf{The map $\alpha$ is one-to-one:}  Let $a, a' \in \im \pi_u({(A,C)}_{kl})$ be such that $\alpha(a)=\alpha(a')$.  By definition, $a$ and $a'$ are nonzero elements in $ \pi_u({(A, C)}_l$ 
		such that $a=\pi_u ({(A,C)}_{kl})(b)$ and $a'=\pi_u ({(A,C)}_{kl})(b')$ for some nonzero $b$ and $b'$ in $\pi_u({(A, C)}_k)$, respectively.  Hence, by definition of $\alpha$, we have
		$$
		\pi_u ({(X,B)}_{kl})(f_k(b))=\pi_u ({(X,B)}_{kl})(f_k(b')).
		$$ 
		By commutativity of the diagram in Figure \ref{A}, we get 
		$$
		f_l(\pi_u ({(A,C)}_{kl})(b))=f_l(\pi_u ({(A,C)}_{kl})(b')).
		$$  
		Since $f_l$ is isomorphism for $u< \textrm{min} \{m_1+n_1, m_2+n_2\}$, we obtain $\pi_u ({(A,C)}_{kl})(b)=\pi_u ({(A,C)}_{kl})(b')$, which means $a=a'$.  Therefore, $\alpha$ is one-to-one.
		
		\textbf{The map $\alpha$ is onto:}  Let $c$ be a nonzero element of  $\im \pi_u({(X,B)}_{kl})$.  By definition of persistent homotopy groups, there exists a nonzero $c'\in \pi_u({(X, B)}_k)$ such that $c=\im \pi_u({(X,B)}_{kl})(c')$.  Since $f_k$ is surjective for $u\le \textrm{min} \{m_1+n_1, m_2+n_2\}$, there exists a nonzero 
		$b\in \pi_u({(A, C)}_k)$ such that $c'=f_k(b)$.  Let $a=\pi_u ({(A,C)}_{kl})(b)$, then by definition $a\in \im \pi_u({(A,C)}_{kl})$.  We have 
		$$
		\alpha(a)=\pi_u({(X,B)}_{kl})(f_k(b))=\pi_u({(X,B)}_{kl})(c')=c,
		$$ which means that $\alpha$ is onto.
	\end{proof}
In the following example, we show how to get information about a pair of spaces by working on a pair of smaller spaces. 
\begin{example} \label{torus-excision}
    Let $X$ be the torus decomposed as $X=A\cup B$, where $A\cap B$ is the circle $C$ as shown in Figure \ref{fig:torus}.  Let $f \colon X\to \bR$ be the height function whose sublevelsets are assembled into the filtrations $F_{(A, C)}$, $F_{(B, C)}$ and $F_{(X, B)}$ as above.
    Let $k<l$ be two fixed filtration levels.  Note that the pairs ${(A, C)}_k$ and ${(B, C)}_k$ are $m$-connected for any positive integer $m$.  The pair ${(A, C)}_l$ is $1$-connected and the pair ${(B, C)}_l$ is $0$-connected.
    By Theorem \ref{Exc}, we have an isomorphism
    $$\im \pi_u({(A,C)}_{kl})\cong \im \pi_u({(X,B)}_{kl})$$
    for $u<1$ and a surjection $\alpha \colon \im \pi_u({(A,C)}_{kl}) \to \im \pi_u({(X,B)}_{kl})$ for $u=1$.  Since $\im \pi_1({(A,C)}_{kl})$ is trivial, we can immediately conclude that $\im \pi_1({(X,B)}_{kl})$ is also trivial.

\begin{figure}[hbt]
			\centering
	\hspace{1.5 cm}	\includegraphics[width=.5\textwidth,height=.2\textheight]{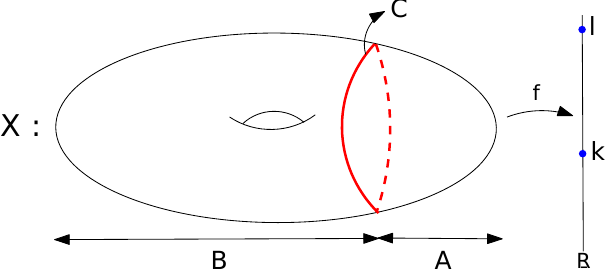}
			\caption{The height function $f$ on $X=A\cup B$.}
			\label{fig:torus}
		\end{figure}

Throughout the paper, and especially in Section~\ref{5}, we refer to such persistent homotopy groups as the \emph{sublevelset persistent homotopy groups of $f$}.
        
\end{example}


\begin{remark}
To motivate the use of excision, we point out that directly computing $\im \pi_1({(X,B)}_{kl})$ faces three key challenges:
\begin{itemize}
    \item[(i)] \text{Relative Loop Complexity}: Loops in \( X \) constrained to start/end in \( B \) require tracking both the topology of \( X \) and the subspace \( B \). For the torus, this involves non-contractible loops wrapping around handles, whose persistence depends on how \( B \) evolves across \( k \) and \( l \).
    
    \item[(ii)] \text{Filtration Dynamics}: As \( B \) changes with \( k \) (e.g., merging path components or forming new holes), the relations in \( \pi_1(X, B) \) may change. For instance, a loop that is non-trivial at \( k \) may become trivial at \( l \) if \( B \) fills in a hole.

     \item[(iii)] \text{Global Interactions}: The inclusion \( B \hookrightarrow X \) couples local features of \( B \) (e.g., its own fundamental group) with the global topology. Isolating contributions from \( B \) alone is infeasible without decomposition.
\end{itemize} 

In Example~\ref{torus-excision}, we note that
     \( A \) is chosen such that \( (A, C) \) is $1$-connected, simplifying $\im \pi_1({(A,C)}_{kl})$.
     The intersection \( C = A \cap B \) is a circle, whose persistent \( \pi_1 \)-groups are well-understood.
     Excision replaces $\im \pi_1({(X,B)}_{kl})$ with $\im \pi_1({(A,C)}_{kl})$, reducing the problem to a localized computation on \( A \).

Let us also mention impacts on computations:
\begin{itemize}
    \item[(i)] \text{Localization}: By focusing on \( A \) and \( C \), we avoid analyzing the full torus \( X \).
    For instance, if \(A\) is a contractible neighborhood of a critical point in the energy landscape (e.g., the vertex \(aaa\) in hexane's \(3\times3\times3\) grid at energy level \(3\beta\) in Figure~\ref{fig:Filled_cubes} (B), or other critical points analyzed in Section 5.2), \(\mathrm{Im}\pi_1((A,C)_{kl})\) becomes trivial.



    \item[(ii)] \text{Dimension Reduction}: The torus \( X \) has non-trivial \( \pi_1 \)-generators, but excision shifts focus to simpler subspaces where generators are easier to track.

 \item[(iii)]\text{Robustness of Persistence Intervals}: 
 Since $\pi_1$ is nonabelian, one cannot literally speak of a barcode as in the homology setting, but one can still record the birth and death values of each persistent class in $\mathrm{Im}\,\pi_1((X,B)_{k\ell})$.
 Excision guarantees that these birth–death parameters coincide with those of $\mathrm{Im}\,\pi_1((A,C)_{k\ell})$.
 In particular, any persistent loop in $(X,B)$ appears (and dies) exactly when its image appears (and dies) in $(A,C)$.
 Thus, the \emph{persistence intervals} of nontrivial $\pi_1$-elements are preserved under excision; in this sense, we refer to their \emph{robustness} under decomposition.
\end{itemize}

In practice, one often decomposes $X$ into subspaces whose $\pi_1$ is trivial or at least easily computed.  Excision then allows one to read off the persistent $\pi_1$ of $(X,B)$ from these simpler pieces, thereby circumventing the combinatorial and global difficulties of a direct calculation.  
\end{remark}

In conclusion,
Example~\ref{torus-excision} illustrates how excision circumvents the combinatorial and global challenges of computing persistent relative homotopy groups. By decomposing \( X \) into simpler subspaces \( A \) and \( B \), the theorem transforms an intractable problem into one that more feasible, both computationally and theoretically. This underscores excision as a useful tool for persistent homotopy calculations in complex spaces, such as energy landscapes.


As a corollary of Theorem \ref{Exc}, we prove a Freudenthal suspension theorem, which defines a stability property for persistent homotopy groups.  
	
	Consider the suspension $SX$ of $X$ as the union of two copies of the cone over $X$, that is, $SX=C_+X\cup C_-X$, where $C_+X= (X\times [0,1])/(X\times \{1\})$ and $C_-X=(X\times [-1,0])/(X\times \{-1\})$.  
	The filtration $F_{X}$ of $X$ induces filtrations on $SX$, $C_+X$ and $C_-X$ as $F_{SX}$, $F_{C_+X}$ and $F_{C_-X}$, respectively, such that $F_{SX}(k):=(SX)_k:=SX_k$, $F_{C_+X}(k):=(C_+X)_k:=C_+X_k$ and
	$F_{C_-X}(k):=(C_-X)_k:=C_-X_k$, where the suspensions and cones are taken at the point $x_0$.

	
	Next, we state some lemmas from which the suspension theorem for persistent homotopy groups follows as a corollary. 	
	\begin{lemma}\label{l1} 
		Suppose that $X_k$ is $(n-1)$-connected and $X_l$ is $(m-1)$-connected.  Then 
		$$
		\im \pi_{u+1}({(C_+X, X)}_{kl}) \cong \im\pi_{u+1}({(SX, C_-X)}_{kl})
		$$ 
		for $u+1< \textrm{min} \{2n, 2m\}$.
	\end{lemma}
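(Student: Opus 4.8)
The plan is to recognize this lemma as a direct instance of the persistent excision theorem, Theorem~\ref{Exc}, applied to the decomposition $SX = C_+X \cup C_-X$. First I would set up the excision data by taking $A = C_+X$, $B = C_-X$, and the intersection $C = C_+X \cap C_-X = X$ (the equatorial copy of $X$), so that the ambient ``total space'' of the excision is $SX$. Under these identifications the excision homomorphism of Theorem~\ref{Exc} in degree $k+1$ becomes exactly
$$
\alpha \colon \pi_{k+1}^{u,v}(C_+X, X) \to \pi_{k+1}^{u,v}(SX, C_-X),
$$
which is the map we want to show is an isomorphism. The induced filtrations satisfy $SX_w = C_+X_w \cup C_-X_w$ and $C_+X_w \cap C_-X_w = X_w$ at each level $w$, so the hypotheses of the excision theorem are already in the required form, with $A_w = C_+X_w$, $B_w = C_-X_w$ and $C_w = X_w$.

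Next I would verify the connectivity conditions. The crucial input is that each cone $C_\pm X_w$ is contractible, so the long exact sequence of the pair $(C_\pm X_w, X_w)$ collapses to isomorphisms $\pi_i(C_\pm X_w, X_w) \cong \pi_{i-1}(X_w)$; hence if $X_w$ is $(N-1)$-connected then the pair $(C_\pm X_w, X_w)$ is $N$-connected. Applying this at level $u$, where $X_u$ is $(n-1)$-connected, shows that both $(A_u, C_u) = (C_+X_u, X_u)$ and $(B_u, C_u) = (C_-X_u, X_u)$ are $n$-connected, so in the notation of Theorem~\ref{Exc} we may take $m_1 = n_1 = n$. Applying it at level $v$, where $X_v$ is $(m-1)$-connected, gives $m_2 = n_2 = m$. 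The remaining hypothesis that $C_u = X_u$ and $C_v = X_v$ be non-empty and connected holds automatically in the non-vacuous range: whenever $\min\{2n, 2m\} > 0$ we have $n, m \geq 1$, so $X_u$ and $X_v$ are $0$-connected, hence non-empty and path-connected.

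With the connectivity data in place, Theorem~\ref{Exc} immediately yields that $\alpha$ is an isomorphism for $k+1 < \min\{m_1+n_1, m_2+n_2\} = \min\{2n, 2m\}$, which is precisely the claimed range. I expect no serious obstacle here: the only point requiring care is the bookkeeping that matches the roles of $A$, $B$, $C$, and the ambient space in the excision theorem to the two cones, their common equator, and the suspension, together with the elementary connectivity computation for the contractible cone pairs. Everything else is a direct citation of Theorem~\ref{Exc}.
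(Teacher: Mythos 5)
Your proposal is correct and follows essentially the same route as the paper: decompose $SX_w = C_+X_w \cup C_-X_w$ with intersection $X_w$, use the contractibility of the cones and the long exact sequence of $(C_\pm X_w, X_w)$ to show these pairs are $n$-connected at level $u$ and $m$-connected at level $v$, and then apply Theorem~\ref{Exc}. Your extra remark verifying that $C_u=X_u$ and $C_v=X_v$ are non-empty and connected in the non-vacuous range is a small point the paper leaves implicit, but it does not change the argument.
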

	
	\begin{proof}
		Since $X_k$ is $(n-1)$-connected we have $\pi_{u}(X_k)=0$ for $u\leq n-1$, and since $C_+X_k$ is contractible we have $\pi_{u}(C_+X_k)=0$ for each $u>0$.  
		Thanks to the following long exact sequence of the relative pair ${(C_+X, X)}_k$,
		$$
		\displaystyle
		\xymatrix @-1pc 
		{ &{\cdots}\ar[r]  & \pi_{u}(X_k) \ar[r] & \pi_{u}(C_+X_k)\ar[r]&  \pi_{u}({(C_+X, X)}_k)\ar[r] & \pi_{u-1}(X_k)\ar[r] & {\cdots}}
		$$
		we get $\pi_{u}({(C_+X, X)}_k)=0$ for $u\leq n$, that is, the relative pair ${(C_+X, X)}_k$ is $n$-connected.  Similarly, the pair ${(C_-X, X)}_k$ is $n$-connected 
		and the pairs ${(C_+X, X)}_l$ and ${(C_-X, X)}_l$ are $m$-connected.  Since $SX_k=C_+X_k\cup C_-X_k$ and $C_+X_k\cap C_-X_k=X_k$, for each $k$, we get 
		$$
		\im \pi_{u+1}({(C_+X, X)}_{kl}) \cong \im\pi_{u+1}({(SX, C_-X)}_{kl})
		$$ 
		for $u+1< \textrm{min} \{2n, 2m\}$ by Theorem \ref{Exc}.
	\end{proof}
	
	\begin{lemma}\label{l2}  
		Suppose that $X_k$ and $X_l$ are connected.  Then 
		$$
		\im \pi_{u+1}({(C_+X, X)}_{kl})  \cong \im \pi_{u}(X_{kl})
		$$ 
		for each $u>0$.
	\end{lemma}
	
	\begin{proof}

\begin{figure}[hbt]
			\centering
			\begin{tikzcd}                                    
				\pi_{u+1}({(C_+X, X)}_k) \arrow[rrr , "\pi_{u+1}({(C_+X, X)}_{kl})" ] \arrow["\partial_{u,k}" , d]    
				&&&\pi_{u+1}({(C_+X, X)}_l) \arrow[d, "\partial_{u,l}"] \\
				\pi_{u}(X_k) \arrow[rrr , "\pi_{u}(X_{kl})" ]
				&&&\pi_{u}(X_l)
			\end{tikzcd}
			\caption{Commutative diagram of homotopy groups.}
			\label{B}
		\end{figure}
 
		Consider Figure~\ref{B}, which is commutative by naturality.		
		The boundary homomorphisms $\partial_{u,k}$ and $\partial_{u,l}$ are isomorphisms for each $u>0$, which follows from the long exact sequence of the corresponding relative pairs. 
		Let $\gamma \colon \im \pi_{u+1}({(C_+X, X)}_{kl})  \to \im \pi_u(X_{kl})$ be the map defined as $\gamma(a)=\partial_{u,l}(a)$, or equivalently, $\gamma(a)=\pi_{u}(X_{kl})(\partial_{u,k}(b))$ where $a=\pi_{u+1}({(C_+X, X)}_{kl})(b)$ for some $b\in \pi_{u+1}({(C_+X, X)}_k)$. The map $\gamma$ is a well-defined isomorphism 
		which follows from the commutativity of Figure \ref{B}.
	\end{proof}
	
	\begin{lemma}\label{l3}  
		Suppose that $X_k$ and $X_l$ are connected.  Then $$\im\pi_{u+1}((SX)_{kl}) \cong \pi_{u+1}({(SX, C_-X)}_{kl})$$ for each $u>0$.
	\end{lemma}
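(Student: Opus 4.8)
The plan is to imitate the proof of Lemma \ref{l2} verbatim, with the boundary map there replaced by the natural map $\pi_{k+1}(SX_u)\ra\pi_{k+1}(SX_u,C_-X_u)$ coming from the long exact sequence of the pair $(SX_u,C_-X_u)$. First I would record that each cone $C_-X_u$ is contractible, so that $\pi_j(C_-X_u)=0$ for all $j\geq 1$ (and it is path-connected). Substituting this into the long exact sequence of the pair,
$$
\cdots\ra\pi_{k+1}(C_-X_u)\ra\pi_{k+1}(SX_u)\xrightarrow{\ q_u\ }\pi_{k+1}(SX_u,C_-X_u)\ra\pi_{k}(C_-X_u)\ra\cdots,
$$
the two groups flanking $q_u$ both vanish precisely when $k\geq 1$; this is exactly where the hypothesis $k>0$ is used. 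Hence $q_u$ is an isomorphism, and the identical argument at level $v$ produces an isomorphism $q_v\colon\pi_{k+1}(SX_v)\ra\pi_{k+1}(SX_v,C_-X_v)$.

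Next I would assemble the naturality square. Writing $\ell_{u,v}$ for the map $\pi_{k+1}(SX_u)\ra\pi_{k+1}(SX_v)$ and $\tilde\ell_{u,v}$ for the map $\pi_{k+1}(SX_u,C_-X_u)\ra\pi_{k+1}(SX_v,C_-X_v)$, both induced by the inclusion $SX_u\hookrightarrow SX_v$ (which carries $C_-X_u$ into $C_-X_v$), naturality of the long exact sequence yields a commutative square with horizontal maps $q_u,q_v$ and vertical maps $\ell_{u,v},\tilde\ell_{u,v}$, formally identical to Figure \ref{B}. I would then define $\eta\colon\pi_{k+1}^{u,v}(SX)\ra\pi_{k+1}^{u,v}(SX,C_-X)$ by $\eta(a)=\tilde\ell_{u,v}(q_u(b))$ whenever $a=\ell_{u,v}(b)$ for some $b\in\pi_{k+1}(SX_u)$; well-definedness is immediate from the commutativity of the square.

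To see that $\eta$ is an isomorphism, I would note that $\pi_{k+1}^{u,v}(SX)=\Ima\,\ell_{u,v}$ and $\pi_{k+1}^{u,v}(SX,C_-X)=\Ima\,\tilde\ell_{u,v}$, and that commutativity gives $q_v(\Ima\,\ell_{u,v})=\tilde\ell_{u,v}(q_u(\pi_{k+1}(SX_u)))=\Ima\,\tilde\ell_{u,v}$ since $q_u$ is surjective. Thus $q_v$ carries $\Ima\,\ell_{u,v}$ onto $\Ima\,\tilde\ell_{u,v}$, and because $q_v$ is injective its restriction is bijective; this restriction coincides with $\eta$, since $q_v(\ell_{u,v}(b))=\tilde\ell_{u,v}(q_u(b))=\eta(\ell_{u,v}(b))$. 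Hence $\eta$ is the claimed isomorphism.

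I do not expect a genuine obstacle: the whole content is the vanishing $\pi_j(C_-X_u)=0$ for $j\geq 1$ together with naturality, and the passage to images is the same formal step used in Lemma \ref{l2}. The only point requiring care is bookkeeping the range---demanding that both $\pi_{k+1}(C_-X_u)$ and $\pi_{k}(C_-X_u)$ vanish forces $k\geq 1$, matching the hypothesis $k>0$, and one should check the same conclusion holds at level $v$.
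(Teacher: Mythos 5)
Your proposal is correct and follows essentially the same route as the paper: contractibility of the cones kills the flanking terms in the long exact sequence of $(SX_u,C_-X_u)$ and $(SX_v,C_-X_v)$, and the naturality square then transports the resulting isomorphisms to the images defining the persistent groups. The only difference is cosmetic --- you spell out why the restriction of $q_v$ to $\Ima\,\ell_{u,v}$ is bijective, a step the paper leaves as ``one can easily deduce.''
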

	
	\begin{proof}
		Consider the following long exact sequence for the relative pair $F_{(SX, C_-X)}(k)$:
		$$
		\displaystyle
		\xymatrix @-1pc 
		{ &{\cdots}\ar[r] & \pi_{u+1}(C_-X_k) \ar[r] & \pi_{u+1}(SX_k)\ar[r]&  \pi_{u+1}({(SX, C_-X)}_k)\ar[r] & \pi_{u}(C_-X_k)\ar[r] & {\cdots}}
		$$
		Since $C_-X_k$ is contractible, $\pi_{u}(C_-X_k)=0$ for each $u>0$.  Thus, we have isomorphisms 
		$$
		Sf_k:\pi_{u+1}(SX_k) \to \pi_{u+1}({(SX, C_-X)}_k)
		$$ 
		and 
		$$
		Sf_l:\pi_{u+1}(SX_l) \to \pi_{u+1}({(SX, C_-X)}_l).
		$$
		Now consider the following commutative diagram:
		\begin{figure}[hbt]
			\centering
			\begin{tikzcd}                                    
				\pi_{u+1}(SX_k) \arrow[rrr , "\pi_{u+1}((SX)_{kl})" ] \arrow["Sf_k" , d]    
				&&&\pi_{u+1}(SX_l) \arrow[d, "Sf_l"] \\
				\pi_{u+1}({(SX, C_-X)}_k) \arrow[rrr , "\pi_{u+1}({(SX, C_-X)}_{kl})" ]
				&&&\pi_{u+1}({(SX, C_-X)}_l)
			\end{tikzcd}
			\caption{Commutative diagram of homotopy groups.}
			\label{C}
		\end{figure}
		
		Let $\beta \colon \im \pi_{u+1}((SX)_{kl}) \to \im\pi_{u+1}({(SX, C_-X)}_{kl})$ be the map defined by $\beta(a)=Sf_l(a)$, or equivalently, $$\beta(a)=\pi_{u+1}({(SX, C_-X)}_{kl})(Sf_k(b)),$$ where $a=\pi_{u+1}((SX)_{kl})(b)$ 
		for some $b\in \pi_{u+1}(SX_k)$.
    Using the commutativity of Figure \ref{C}, one can deduce that $\beta$ is an isomorphism.
	\end{proof}
	
	Now, \textit{the Freudenthal suspension theorem for persistent homotopy groups} follows as a corollary of the above lemmas.	
	\begin{theorem} \label{suspension}
		Suppose that $X_k$ is $(n-1)$-connected and $X_l$ is $(m-1)$-connected.  Then 
		$$
		\im \pi_{u}(X_{kl}) \cong \im \pi_{u+1}((SX)_{kl})
		$$ 
		for $u+1< \textrm{min} \{2n, 2m\}$. 
	\end{theorem}
	
	\begin{proof}
		The proof comes as a result of the Lemmas \ref{l1}, \ref{l2}, and \ref{l3}.
	\end{proof}
	
	Next, we state \textit{the Hurewicz Theorem for persistent homotopy groups}.  Consider the homology group functor $H_{n}:\mathbf{Top_{\bullet}} \to \mathbf{Gp}$.  We define the \textbf{$(k, l)$-persistent homology group} of $X$ with respect to the filtration $F_{X}$ to be the image of the group homomorphism 
	$H_n(X_{kl}): H_n(X_k)\to H_n(X_l)$ induced by the inclusion of $X_k$ into $X_l$.  Let us denote this group by $\im H_n(X_{kl})$.
	
In \cite[Theorem 1.5]{memoli-zhou}, the authors prove a persistent version of the Hurewicz theorem for persistent fundamental groups.
The following theorem tells us that, under certain connectivity conditions, there is a bijection between $(k, l)$-persistent homology classes and $(k, l)$-persistent homotopy classes.  
 
	\begin{theorem}
 \label{hurewicz}
		Suppose that $X_k$ is $(m-1)$-connected and $X_l$ is $(n-1)$-connected for $m, n\geq 2$.  
		Then $\im H_u(X_{kl})=0$ for $0<u< \textrm{min} \{m,n\}$, and 
		$$
		\im \pi_{u}(X_{kl}) \cong \im H_{u}(X_{kl})
		$$ for $u= \textrm{min} \{m,n\}$.
	\end{theorem}

	\begin{proof}
		If $X_k$ is $(m-1)$-connected, then the relative homology satisfies $\tilde{H_u}(X_k)=0$ for $u<m$ by the Hurewicz Theorem \cite{hatcher}.
        Since $H_u(X_k)=\tilde{H_u}(X_k)$, for $u>0$, then $H_u(X_k)=0$, for $0<u<m$. Thus, we obtain $\im H_u(X_{kl})=0$, for $0<u< \textrm{min} \{m,n\}$. 
		
		Note that $X_k$ and $X_l$ are $(u-1)$-connected for $u= \textrm{min} \{m,n\}$. Thus, by the Hurewicz Theorem, we have the following isomorphisms 
		$$
		h_k \colon \pi_{u}(X_k) \to H_{u}(X_k).
		$$ 
		and 
		$$
		h_l \colon \pi_{u}(X_l) \to H_{u}(X_l).
		$$
		Now, consider the following commutative diagram:
		
		\begin{figure}[hbt]
			\centering
			\begin{tikzcd}                                    
				\pi_{u}(X_k)\arrow[rr , "\pi_{u}(X_{kl})" ] \arrow["h_k" , d]    
				&&\pi_{u}(X_l)\arrow[d, "h_l"] \\
				H_{u}(X_k)\arrow[rr , "H_{u}(X_{kl})" ]
				&&H_{u}(X_l)) 
			\end{tikzcd}
			\caption{Commutative diagram between levels $k$ and $l$.}
			\label{D}
		\end{figure}
		Let $h \colon \im \pi_{u}(X_{kl}) \to \im H_{u}(X_{kl})$ be the map defined by $h(a)=h_l(a)$, or equivalently, as $h(a)=H_{u}(X_{kl})(h_k(b))$ where $a=\pi_{u}(X_{kl})(b)$ for some $b\in \pi_{u}(X_k)$. 
		Clearly, $h$ is a well-defined homomorphism.
       By using the commutative diagram given in Figure \ref{D}, one can easily obtain that $h$ is an isomorphism for $u= \textrm{min} \{m,n\}$.		
	\end{proof} 


\section{An Application to Energy Landscapes of Molecules}
\label{5}

 In this section, we analyze the sublevelset persistent homotopy groups of the energy landscape of alkane molecules.
 In particular, we explain the additional information these persistent homotopy groups contain beyond what is shown in the persistent homology barcodes.

\subsection{The Potential Energy Landscape of Alkanes}

An n-alkane molecule consists of a linear chain of carbon atoms, with three hydrogen atoms attached to the two carbons at the end of the chain and two hydrogen atoms attached to each internal carbon.
The n-alkane molecules with four, five, six, seven, and eight carbon atoms in the chain are called butane, pentane, hexane, heptane, and octane, respectively.

\begin{tikzcd}
            & H \arrow[d, no head]           & H \arrow[d, no head]           & H \arrow[d, no head]           &                  & H \arrow[d, no head]           & H \arrow[d, no head]           & H \arrow[d, no head]           &   \\
H \arrow[r, no head] & C \arrow[d, no head] \arrow[r, no head] & C \arrow[d, no head] \arrow[r, no head] & C \arrow[d, no head] \arrow[r, no head] & \ldots \arrow[r, no head] & C \arrow[r, no head] \arrow[d, no head] & C \arrow[r, no head] \arrow[d, no head] & C \arrow[d] \arrow[r, no head] & H \\
            & H                     & H                     & H                     &                  & H                     & H                     & H                     &  
\end{tikzcd}

We study the Optimized Potentials for Liquid Simulations (OPLS-UA) model~\cite{jorgensen1996development} for the energy function of alkanes, as considered in \cite{Mirth2021}.
In this OPLS-UA model, the potential energy landscape of an alkane molecule is governed exclusively by the C--C--C--C dihedral angles $\phi_i$.
In the case of butane, a carbon chain of length four, there is only a single dihedral angle $\phi\in S^1$, where $S^1$ is the circle; see Figure~\ref{fig:dihedral-angle}.
So the energy landscape of butane is a function $f_1 \colon S^1 \to \bR$ that is defined by
\[f_1(\phi)=c_1(1+\cos\phi)+c_2(1-\cos2\phi)+c_3(1+\cos3\phi).\]
Here the energy coefficients are $c_1/k_B=355.03$~K, $c_2/k_B=-68.19$~K, and $c_3/k_B=791.32$~K, where $k_B$ is the Boltzmann constant \cite{chen1998thermodynamic}.
An alkane molecule with $m$ carbon atoms has $n=m-3$ dihedral angles, and the corresponding OPLS-UA energy function $f_n \colon (S^1)^n\to\bR$ is defined by $f_n(\phi_1,\phi_2,\ldots,\phi_n)=f_1(\phi_1)+f_1(\phi_2)+\ldots+f_1(\phi_n)$, where each $\phi_i\in S^1$ encodes a different dihedral angle.

\begin{figure}[ht]
\includegraphics[width=0.4\textwidth]{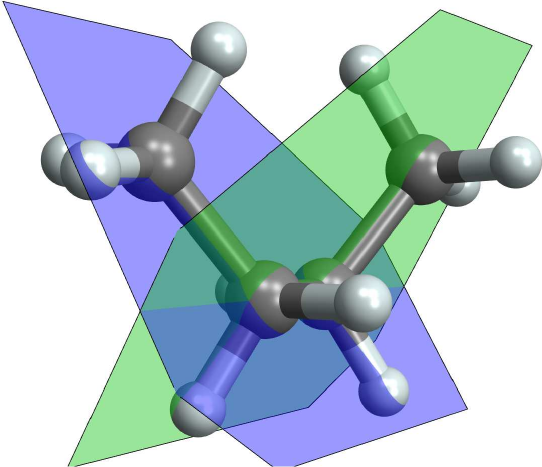}
\caption{Three adjacent carbons in an alkane chain define a plane, one drawn above in blue and another in green.
A C--C--C--C dihedral angle is the angle between two planes sharing a C--C bond.
In butane, drawn above, there is only a single dihedral angle.
Figure made in Mathematica.
}
\label{fig:dihedral-angle}
\end{figure}

\begin{figure}[ht]
\includegraphics[width=0.4\textwidth]{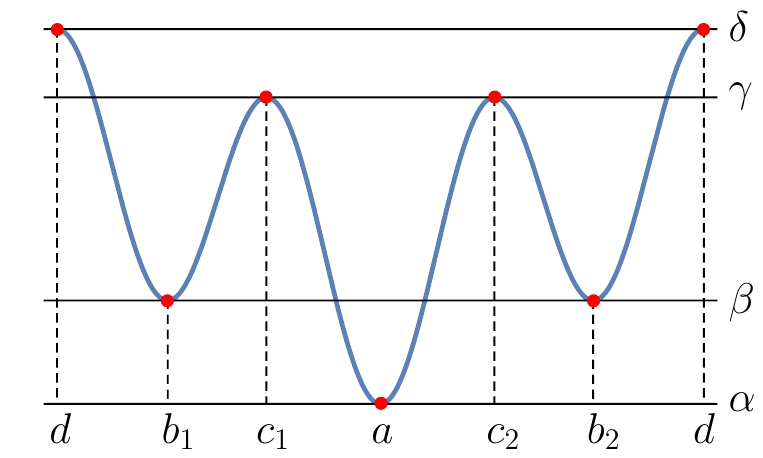}
\hspace{10mm}
\includegraphics[width=0.4\textwidth]{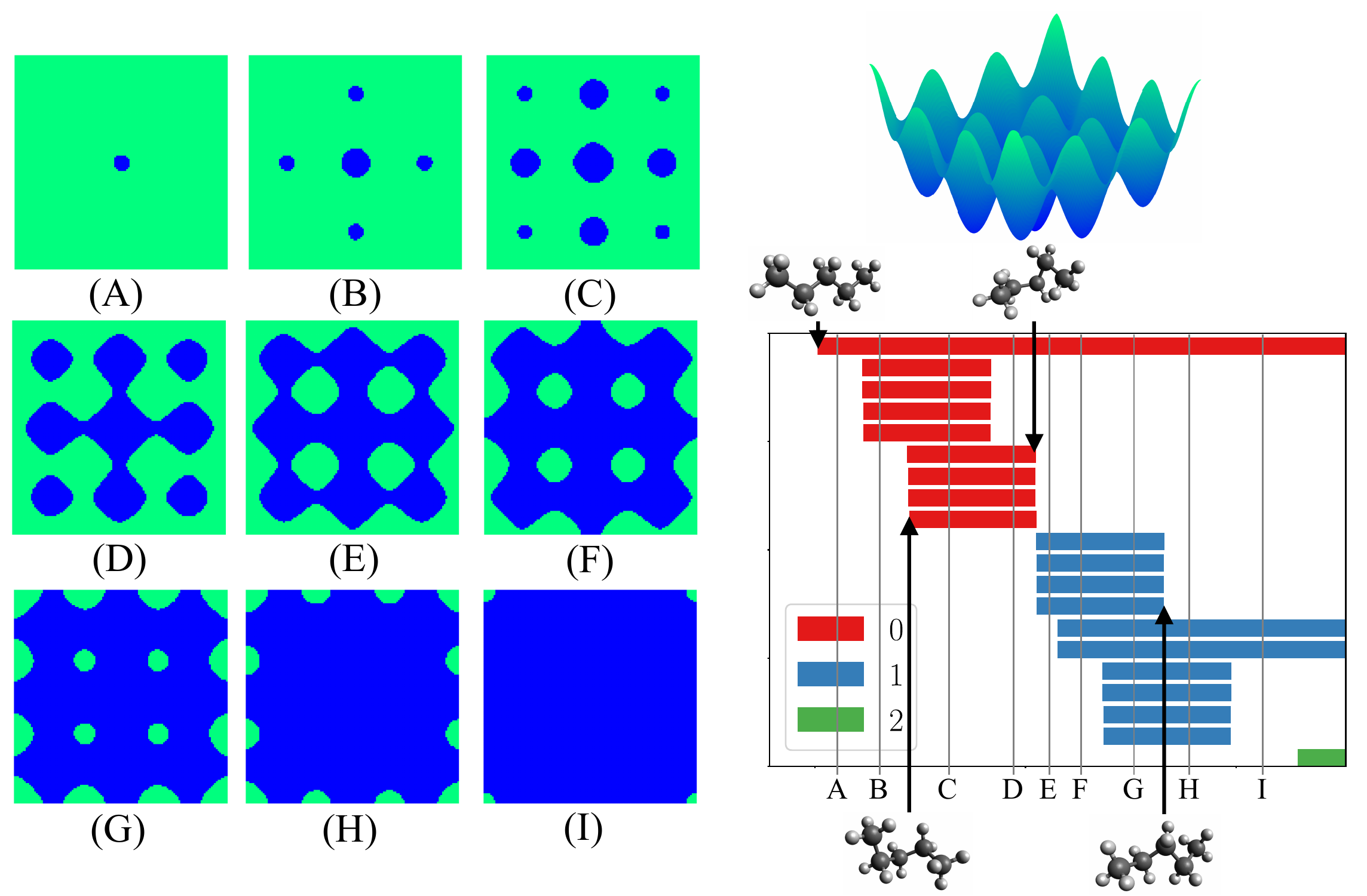}
\caption{
(Left) Energy landscape of butane.
The $y$-axis is energy, and the $x$-axis is the dihedral angle varying from $0$ to $2\pi$.
From left to right, the critical points are $d, b_1, c_1, a, c_2, b_2$.
(Right) Energy landscape of pentane.
Figures from \cite{Mirth2021}.
}
\label{fig:butane-labeled}
\end{figure}

See Figure~\ref{fig:butane-labeled}(left) for a picture of the energy landscape $f_1\colon S^1\to \bR$ for butane.
This Morse function has a global minimum $a$ with energy value $f_1(a)=\alpha=0$, two local minima $b_1$ and $b_2$ with energy value $f_1(b_1)=f_1(b_2)=\beta=3.47099\ldots$, two local maxima $c_1$ and $c_2$ with energy value $f_1(c_1)=f_1(c_2)=\gamma=13.8062\ldots$, and finally one global maximum $d$ with energy value $f_1(d)=\delta=19.0626\ldots$.
For pentane, the energy function $f_2\colon (S^1)^2\to \bR$ is shown in Figure~\ref{fig:butane-labeled}(right).
The sublevelsets of the pentane energy function $f_2$ are shown in Figure~\ref{fig:pentane-sublevelsets}.

\begin{figure*}
\centering
\includegraphics[width=0.44\textwidth]{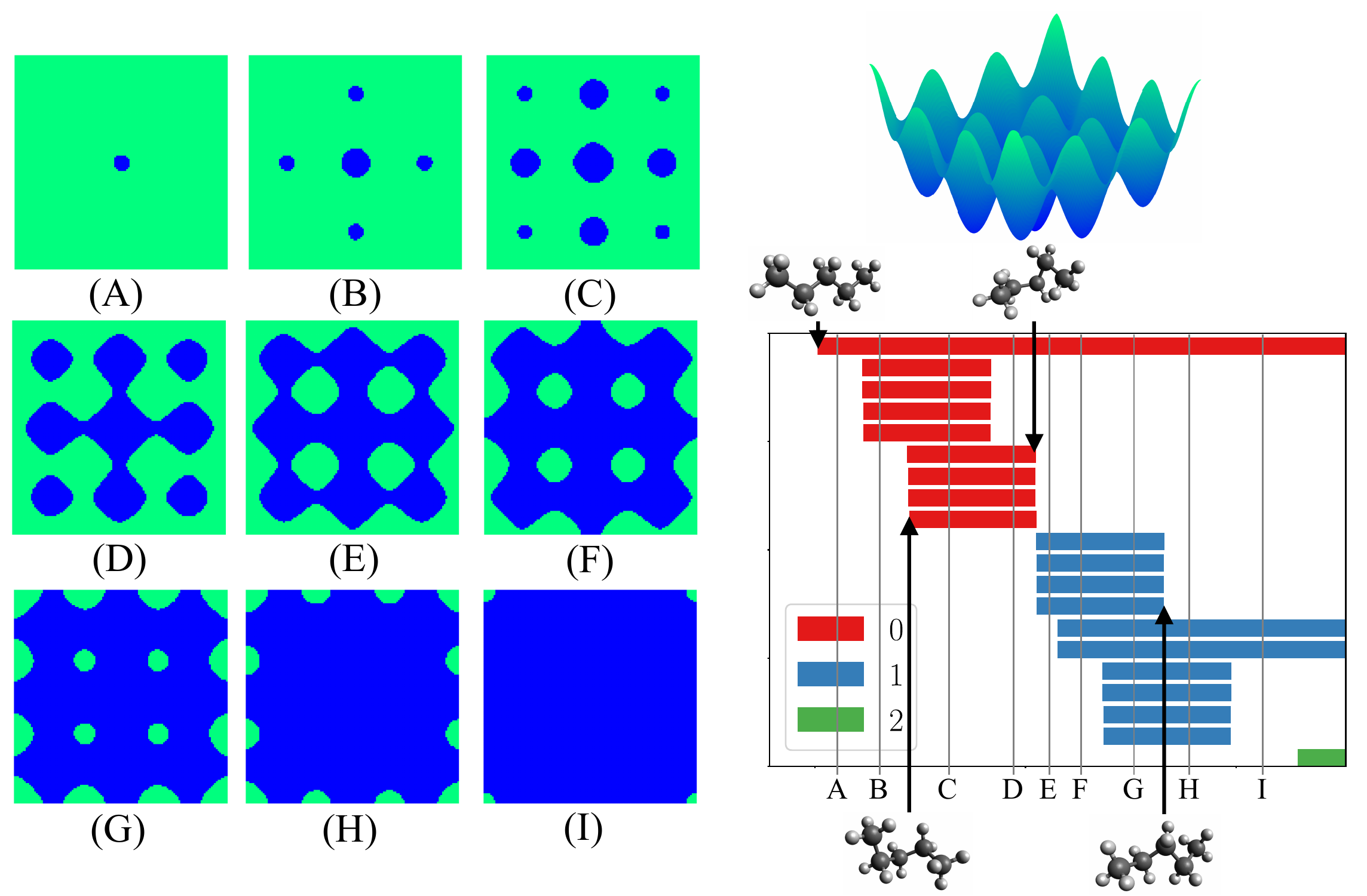}
\hspace{5mm}
\includegraphics[width=0.50\textwidth]{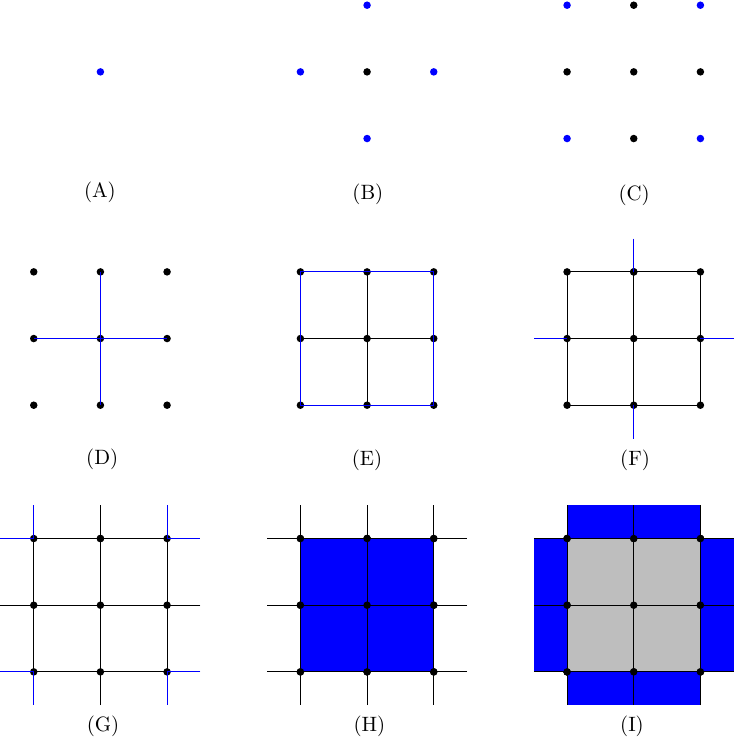}
\caption{
(Left) Pentane sublevelsets $f_2^{-1}(-\infty,r]:=\{y\in (S^1)^2~|~f_2(y)\le r\}$ are drawn, in blue, for increasing values of energy value $r$.
Figure from~\cite{Mirth2021}.
(Right) Pentane sublevelsets, drawn as subcomplexes of the Morse complex.
The entire Morse complex (not drawn) is a CW complex model of the torus $(S^1)^2$ with $9$ critical $0$-cells, $18$ critical $1$-cells, and $9$ critical $2$-cells.
}
\label{fig:pentane-sublevelsets}
\end{figure*}

A common low-dimensional representation of a high-dimensional energy landscape is via a \emph{merge tree} or \emph{disconnectivity graph}~\cite{wales2003energy,Wales2005}.
In such a representation, each connected component of an energy sublevelset corresponds to a vertex in a graph.
As a result, merge trees encode how new configurations and new transition paths between configurations emerge as the energy level increases.
However, merge trees do not contain any information about the \emph{shape} of each connected component of an energy landscape.
The paper~\cite{Mirth2021} studied the persistent homology of the energy sublevelsets of alkane chains, showing that connected components of the energy landscape can have complicated topologies with a large number of $i$-dimensional holes for $i\ge 1$.
Furthermore, an analytical formula was given for the OPLS-UA energy function $f_n \colon (S^1)^n\to\bR$ modelling $n+3$ carbons in an alkane chain, based on the K\"unneth formula for persistent homology~\cite{bubenik2021homological,carlsson2020persistent,gakhar2019kunneth,polterovich2017persistence}.
This work is extended in~\cite{story2023additive}, which shows how the persistent K\"unneth formula can be used to describe the persistent homology barcodes of the sublevelset persistence of any additive energy function over a product space, including, for example, branched alkanes.

\begin{figure*}
\centering
\includegraphics[width=.48\textwidth]{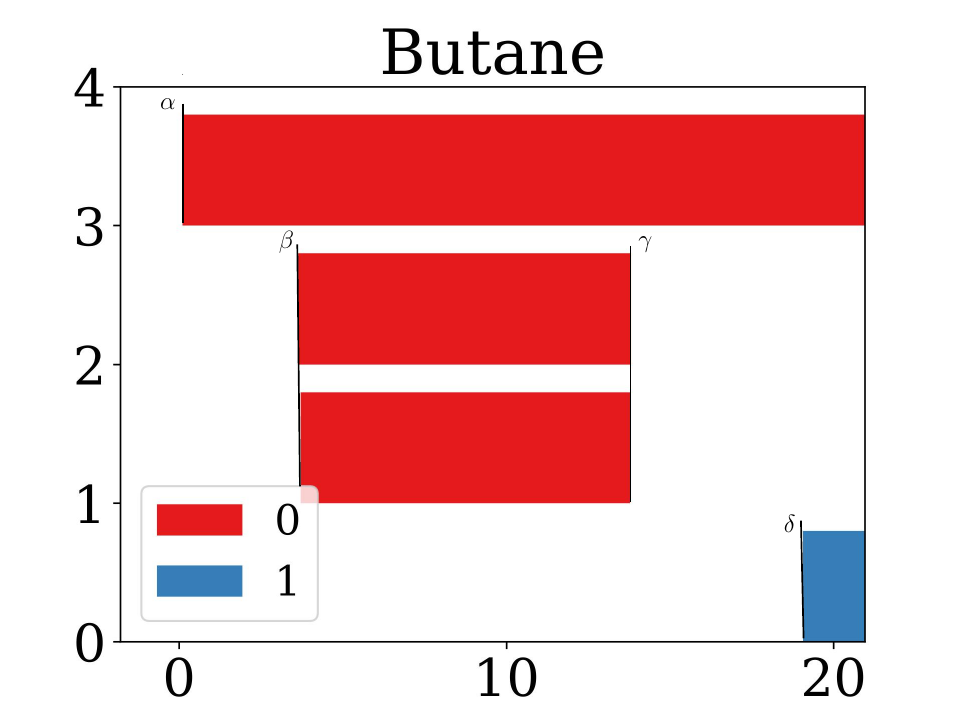}
\includegraphics[width=.48\textwidth]{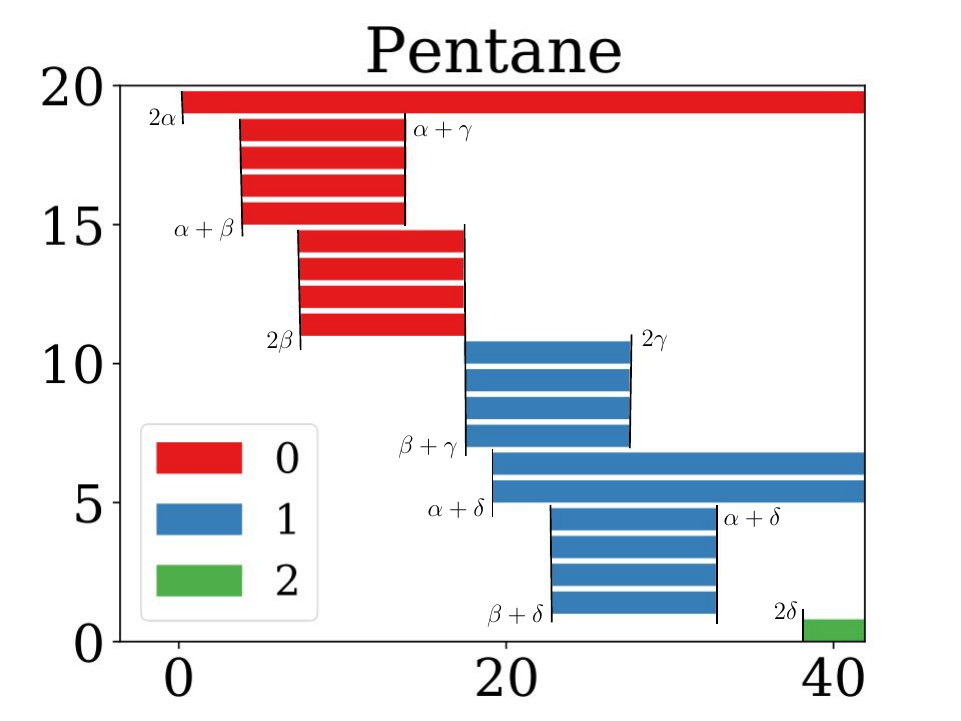}
\caption{
Figure from \cite{Mirth2021} with critical levels indicated.
Persistent homology barcodes for butane and pentane.
The 0-, 1-, and 2-dimensional homology features are shown in red, blue, and green, respectively.
The $x$-axis is energy (kJ/mol).}
\label{fig:alkane-barcodes}
\end{figure*}

\begin{figure*}
  \centering
\includegraphics[width=.6\textwidth]{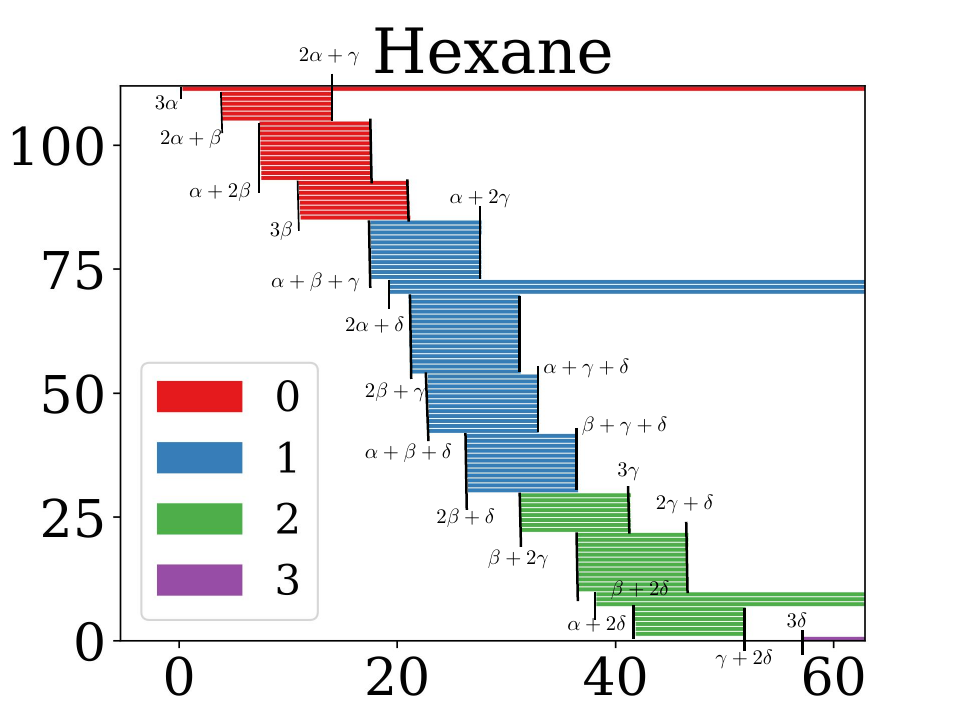}  
\caption{Figure from \cite{Mirth2021} with critical levels indicated.
Persistent homology barcodes for hexane.
The color of the bar indicates the homological dimension.
The $x$-axis is energy (kJ/mol).
}
\label{fig:hexane-barcodes}
\end{figure*}


\subsection{Analysis of the Persistent Homotopy Groups}

In this subsection, we study sublevelset persistent homotopy groups of the energy functions for butane, pentane and hexane.
For butane, there is essentially no difference between persistent homology and homotopy; for pentane, the only difference arises for the semi-infinite homology bars; and for hexane, significant differences between persistent homology and homotopy arise already for the finite-length persistent homology bars.


The butane and pentane sublevelset persistent homology barcodes are depicted in Figure~\ref{fig:alkane-barcodes}.
For butane, there is no significant difference between homology and homotopy persistence.
For pentane, the difference appears at the last critical energy level, $2\delta$, when the sublevelset becomes the entire torus $S^1 \times S^1$.
At the energy level $2\delta$, an $H_2$ persistent homology generator is introduced, and there is no change to $H_1$ (the two current bars continue).
On the other hand, when we consider persistent homotopy, at the last critical energy level $2\delta$, there is no new $\pi_2$ generator, but instead, this level introduces a commutator relation on the fundamental group.
Indeed, the fundamental group transitions from the free group on two generators to the free abelian group on two generators.

We now analyze the sublevelset persistent homotopy groups of the energy function $f_3\colon (S^1)^3\to \bR$ for hexane.
For the persistent homology barcodes, see Figure~\ref{fig:hexane-barcodes}.
There are $20$ critical sublevelsets for hexane, whose energies are ordered from smallest to largest as follows:
\begin{align*}
& 3\alpha, 2\alpha+\beta, \alpha+2\beta, 3\beta, 2\alpha + \gamma, \alpha + \beta + \gamma, 2\alpha+ \delta, \\
& 2\beta + \gamma, \alpha+\beta+\delta, 2\beta+\delta, \alpha+2\gamma, \beta+2\gamma, \alpha+\gamma+\delta,  \\
& \beta+\gamma+\delta, \alpha+2\delta, 3\gamma, \beta+2\delta, 2\gamma+\delta, \gamma+2\delta, 3\delta.
\end{align*}



As we increase the energy level past a critical value, a collection of critical points (with that prescribed critical energy value) appear in the sublevelset.
Each critical point of index $i$ changes the shape of the sublevelset, up to homotopy equivalence, by attaching a cell of dimension $i$~\cite{milnor2016morse, BanyagaHurtubise}.
As described in Appendices~A and B of~\cite{Mirth2021}, in this way, we build up the Morse complex of the energy function, which is a CW complex model of the $3$-dimensional torus $(S^1)^3$, containing $27$ critical $0$-cells, $81$ critical $1$-cells, $81$ critical $2$-cells, and $27$ critical $3$-cells.
We now describe these sublevelsets and their homotopy groups $\pi_i$ for $i\le 2$ as the energy values increase and hence as more critical cells are included.

In Table~\ref{table:hexane-homotopy-groups}, we list the fundamental and second homotopy groups of the sublevelsets of hexane, starting with the last energy value for which both $\pi_1$ and $\pi_2$ are the trivial groups.
By $\ast_k \bZ$, we denote the free group on $k$ generators, which is not abelian for $k\ge 2$, and by $\oplus_k \bZ$, we denote the free abelian group on $k$ generators.
This analysis does not consider the homotopy groups $\pi_i$ for $i\ge 3$, which can be quite complicated.
Indeed, even for the $2$-sphere we have $\pi_3(S^2)\cong \bZ\neq 0$, as generated by the Hopf fibration.

\begin{table}
\begin{align*}
& 2\alpha+\gamma: && \pi_1 \cong 0 && \pi_2 \cong 0\\
& \alpha+\beta+\gamma: && \pi_1 \cong \ast_{12} \bZ && \pi_2 \cong 0\\
& 2\alpha + \delta: && \pi_1 \cong \ast_{12} \bZ \ast_{3} \bZ && \pi_2 \cong 0\\
& 2\beta + \gamma:   && \pi_1 \cong \ast_{12} \bZ \ast_{3} \bZ \ast_{16} \bZ  && \pi_2 \cong 0\\
& \alpha + \beta + \delta:  &&\pi_1 \cong \ast_{12} \bZ \ast_{3} \bZ \ast_{16} \bZ \ast_{12}\bZ  && \pi_2 \cong 0\\ 
& 2\beta + \delta:  &&\pi_1 \cong \ast_{12} \bZ \ast_{3} \bZ \ast_{16} \bZ \ast_{12}\bZ \ast_{12}\bZ  && \pi_2 \cong 0\\
& \alpha + 2\gamma :  &&\pi_1 \cong \ast_{3} \bZ \ast_{16} \bZ \ast_{12}\bZ \ast_{12}\bZ  && \pi_2 \cong 0\\
& \beta + 2\gamma :  &&\pi_1 \cong \ast_{3} \bZ \ast_{12}\bZ \ast_{12}\bZ  && \pi_2 \cong \pi_2((\vee_{27}S^1)\vee(\vee_{8}S^2)) \cong \oplus_\infty \bZ\\
& \alpha + \gamma + \delta:  &&\pi_1 \cong \ast_{3} \bZ \ast_{12}\bZ  && \pi_2\cong \pi_2((\vee_{15}S^1)\vee(\vee_{8}S^2)) \cong \oplus_\infty \bZ\\
& \beta + \gamma + \delta:  &&\pi_1 \cong \ast_{3} \bZ     && 
 \pi_2\cong \pi_2((\vee_{3}S^1)\vee(\vee_{20}S^2)) \cong \oplus_\infty \bZ  \\
& \alpha + 2 \delta:  &&\pi_1 \cong  \oplus_{3}\bZ && \pi_2 \cong \pi_2(((S^1)^3\setminus\{p\})\vee(\vee_{20}S^2)) \\ 
& 3\gamma:  &&\pi_1 \cong \oplus_{3}\bZ && \pi_2 \cong \pi_2(((S^1)^3\setminus\{p\})\vee(\vee_{12}S^2)) \\
& \beta + 2 \delta :  &&\pi_1 \cong \oplus_{3}\bZ && \pi_2 \cong  \pi_2(
((S^1)^3\setminus\{p\})\vee(\vee_{18}S^2))  \\
& 2\gamma + \delta :  &&\pi_1 \cong  \oplus_{3}\bZ && \pi_2 \cong \pi_2(((S^1)^3\setminus\{p\})\vee(\vee_{6}S^2))   \\
& \gamma + 2 \delta  :  &&\pi_1 \cong \oplus_{3}\bZ  && \pi_2 \cong \pi_2((S^1)^3\setminus\{p\}) \\
& 3\delta :  &&\pi_1 \cong \oplus_{3}\bZ  && \pi_2 \cong 0  \\ 
\end{align*}
\caption{The fundamental and second homotopy groups of the sublevelsets of hexane, starting with the last energy value for which both $\pi_1$ and $\pi_2$ are the trivial groups.
By $\ast_k \bZ$ we denote the free group on $k$ generators, and by $\oplus_k \bZ$ we denote the free abelian group on $k$ generators.
For simplicity, at each row, $\pi_1$ and $\pi_2$ denote the homotopy groups of the sublevelset at the corresponding energy value.}
\label{table:hexane-homotopy-groups}
\end{table}

\begin{figure*}[htb]
\centering
\begin{subfigure}{0.23\textwidth}
\includegraphics[width=\textwidth]{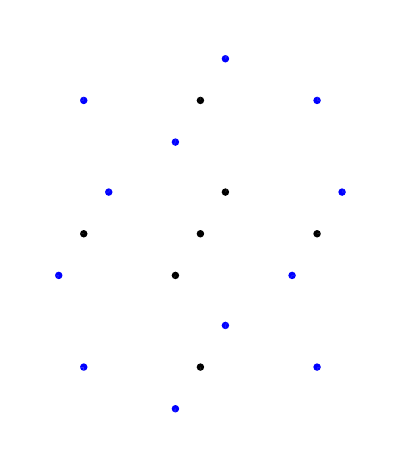}
\caption{$\alpha+2\beta$}
\label{fig:image_-3}
\end{subfigure}
\hfill
\begin{subfigure}{0.23\textwidth}
\includegraphics[width=\textwidth]{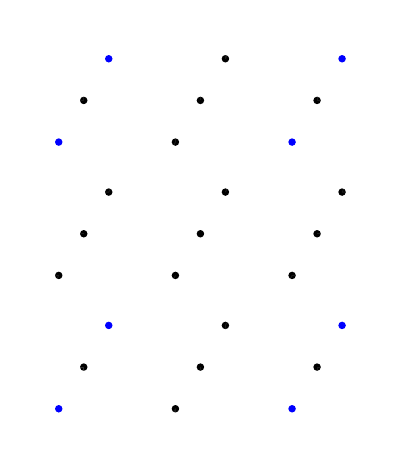}
\caption{$3\beta$}
\label{fig:image_-2}
\end{subfigure}
\hfill
\begin{subfigure}{0.23\textwidth}
\includegraphics[width=\textwidth]{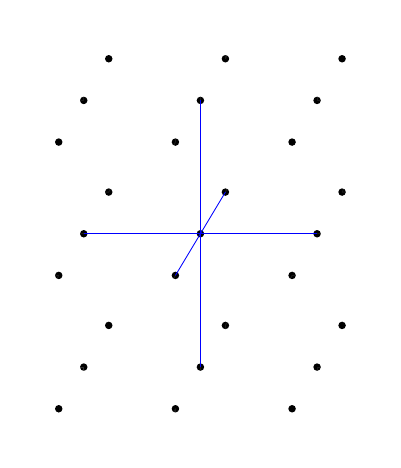}
\caption{$2\alpha+\gamma$}
\label{fig:image_-1}
\end{subfigure}
\hfill
\begin{subfigure}{0.23\textwidth}
\includegraphics[width=\textwidth]{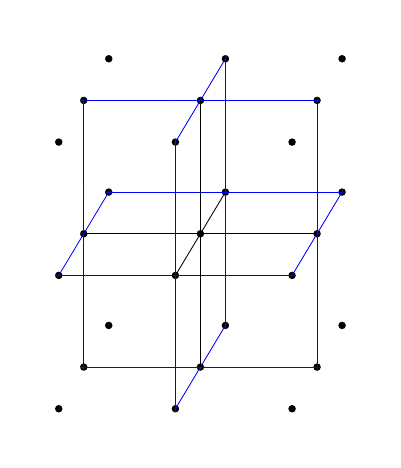}
    \caption{$\alpha+\beta+\gamma$}
    \label{fig:image_0}
\end{subfigure}
\hfill
\begin{subfigure}{0.23\textwidth}
\includegraphics[width=\textwidth]{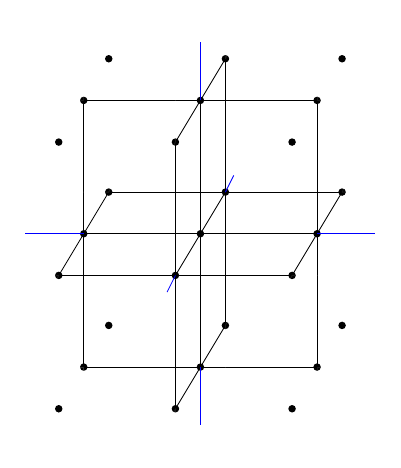}
\caption{$2\alpha+\delta$}
\label{fig:image_1}
\end{subfigure}
\hfill
\begin{subfigure}{0.23\textwidth}
\includegraphics[width=\textwidth]{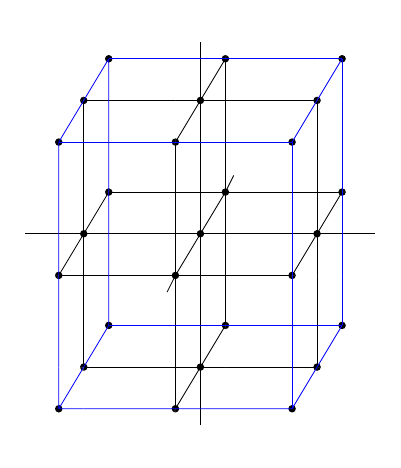}
\caption{$2\beta+\gamma$}
\label{fig:image_2}
\end{subfigure}
\hfill
\begin{subfigure}{0.23\textwidth}
\includegraphics[width=\textwidth]{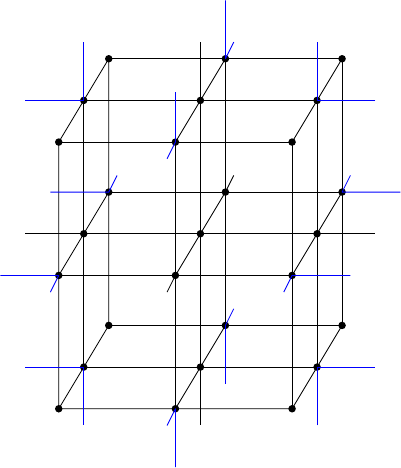}
\caption{$\alpha+\beta+\delta$}
\label{fig:image_3}
\end{subfigure}
\hfill
\begin{subfigure}{0.23\textwidth}
\includegraphics[width=\textwidth]{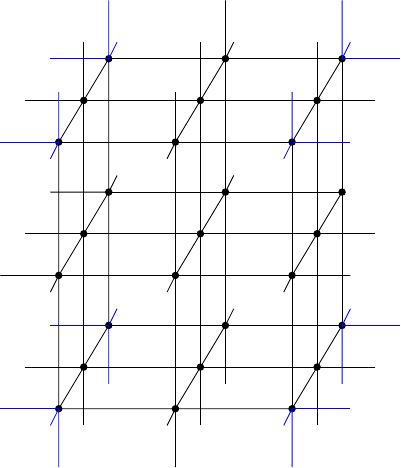}
    \caption{$2\beta+\delta$}
    \label{fig:image_4}
\end{subfigure}
\hfill
\begin{subfigure}{0.23\textwidth}
\includegraphics[width=\textwidth]{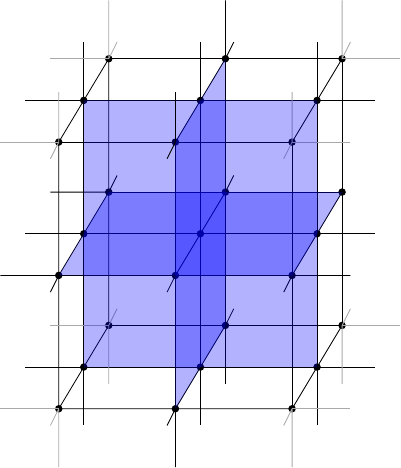}
\caption{$\alpha+2\gamma$}
\label{fig:image_5}
\end{subfigure}  
\hfill
\begin{subfigure}{0.23\textwidth}
\includegraphics[width=\textwidth]{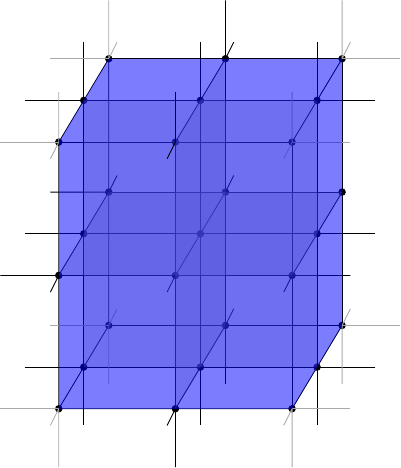}
\caption{$\beta+2\gamma$}
\label{fig:image_6}
\end{subfigure}
\hfill
\begin{subfigure}{0.23\textwidth}
\includegraphics[width=\textwidth]{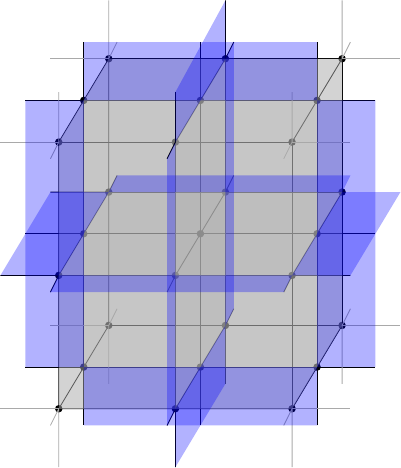}
    \caption{$\alpha+\gamma+\delta$}
    \label{fig:image_7}
\end{subfigure}
\hfill
\begin{subfigure}{0.23\textwidth}
\includegraphics[width=\textwidth]{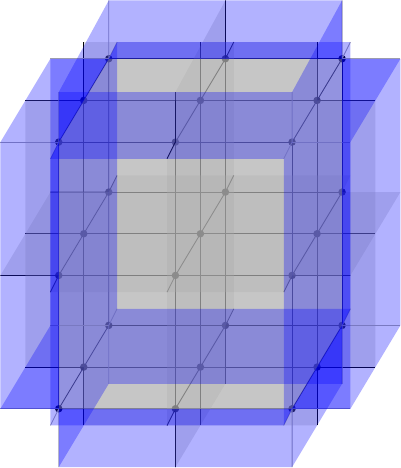}
\caption{$\beta+\gamma+\delta$}
\label{fig:image_8}
\end{subfigure}  
\hfill
\begin{subfigure}{0.23\textwidth}
\includegraphics[width=\textwidth]{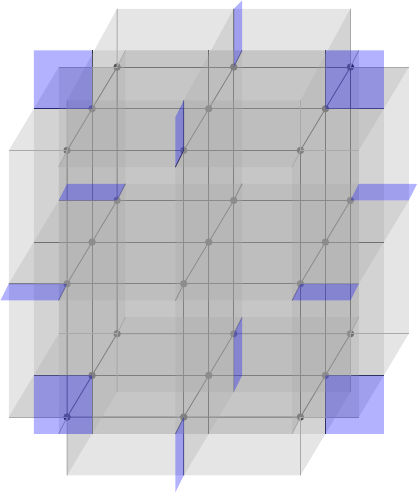}
\caption{$\alpha+2\delta$}
\label{fig:image_9}
\end{subfigure}
\hfill
\begin{subfigure}{0.23\textwidth}
\includegraphics[width=\textwidth]{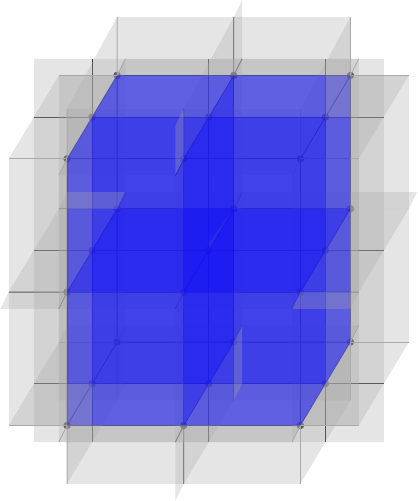}
\caption{$3\gamma$}
\label{fig:image_10}
\end{subfigure}
\hfill
\begin{subfigure}{0.23\textwidth}
\includegraphics[width=\textwidth]{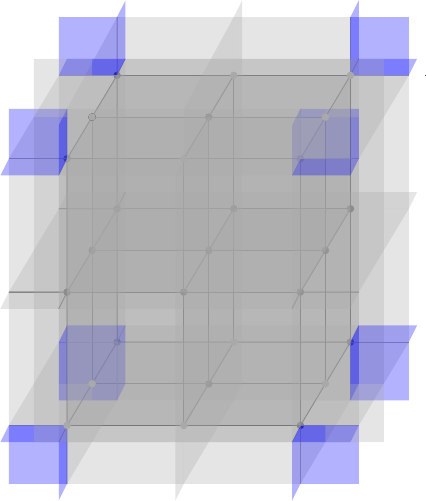}
\caption{$\beta+2\delta$}
\label{fig:image_11}
\end{subfigure}  
\hfill
\begin{subfigure}{0.23\textwidth}
\includegraphics[width=\textwidth]{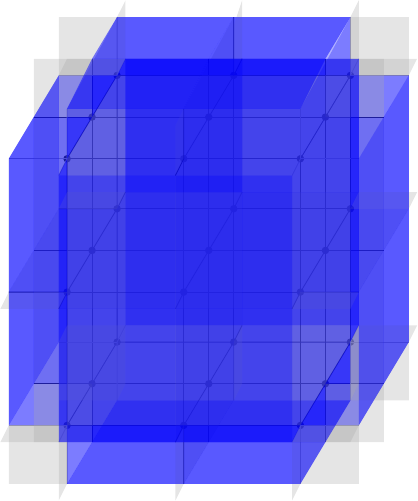}
\caption{$2\gamma+\delta$}
\label{fig:image_12}
\end{subfigure}
\caption{Critical levels of hexane from $\alpha+2\beta$ to $2\gamma+\delta$.
The new cells are plotted in blue at each stage, and all prior cells are plotted in gray. 
}
\label{fig:Filled_cubes}
\end{figure*}

At the lowest critical energy level $3\alpha$ we pass the unique global minium.
At the energy level $2\alpha+\beta$ we pass six more local minima.
At the energy level $\alpha+2\beta$ in Figure~\ref{fig:Filled_cubes}(A), we pass twelve new local minima, increasing the number of connected components to nineteen.
At the level $3\beta$ in Figure~\ref{fig:Filled_cubes}(B), the last eight connected components (vertices) appear, increasing the number of connected components to twenty-seven.
These twenty-seven connected components are arranged in the shape of a $3\times 3 \times 3$ grid on the 3-dimensional torus $(S^1)^3$.

At energy level $2\alpha + \gamma$ in Figure~\ref{fig:Filled_cubes}(C), the first six critical points of index $1$ appear, drawn as edges, reducing the number of connected components from twenty-seven down to twenty-one.

At energy level $\alpha + \beta + \gamma$ in Figure~\ref{fig:Filled_cubes}(D), twenty-four edges appear.
Together, these twenty-four edges reduce the number of connected components by twelve (from twenty-one connected components down to nine), and they produce the first twelve fundamental group generators.
This gives the first non-trivial $\pi_1$ in a sublevelset, namely 
$$
\pi_1(f_3^{-1}(-\infty, \alpha + \beta + \gamma])\cong \ast_{12} \bZ,
$$
the free group on twelve generators.

We denote a cell in our cubical complex as $xyz$, where each of $x$, $y$, and $z$ are an element of the set $\{a,b_1,b_2,c_1,c_2,d\}$.
The dimension of such a cell is the number of entries of the form $c_i$ or $d$.
For example, $aaa$ and $b_1ab_1$ are each 0-cells since they contain no copies of $c_i$ or $d$; see Figure~\ref{fig:Cubes}.
For example, $c_1ab_1$ is the 1-cell connecting the vertex $b_1ab_1$ to the vertex $aab_1$.
And $c_1ac_1$ is the 2-cell, or square, whose boundary is shown in Figure~\ref{fig:generators}(left).
With this notation, the twelve $\pi_1$ generators at energy level $\alpha+\beta+\gamma$ are grouped with four in the $xy$ plane, four in the $xz$ plane, and four in the $yz$ plane, supported on the following vertex sets:
\small
\begin{align*}
&xy: \{aaa, b_1aa, b_1b_1a, ab_1a\}, \{aaa, b_1aa, b_1b_2a, ab_2a\}, \{aaa, b_2aa, b_2b_1a, ab_1a\}, \{aaa, b_2aa, b_2b_2a, ab_2a\}\\
&xz: \{aaa, b_1aa, b_1ab_1, aab_1\}, \{aaa, b_1aa, b_1ab_2, aab_2\}, \{aaa, b_2aa, b_2ab_1, aab_1\}, \{aaa, b_2aa, b_2ab_2, aab_2\}\\
&yz: \{aaa, ab_1a, ab_1b_1, aab_1\}, \{aaa, ab_1a, ab_1b_2, aab_2\}, \{aaa, ab_2a, ab_2b_1, aab_1\}, \{aaa, ab_2a, ab_2b_2, aab_2\}.
\end{align*}
\normalsize


At the level $2\alpha + \delta$ in Figure~\ref{fig:Filled_cubes}(E), three edges appear and three new $\pi_1$ generators are born.
These are infinite cycles that never die, as they are generators for the fundamental group of the entire 3-dimensional torus. 

\begin{figure}[htb]
\includegraphics[width=0.6\textwidth]{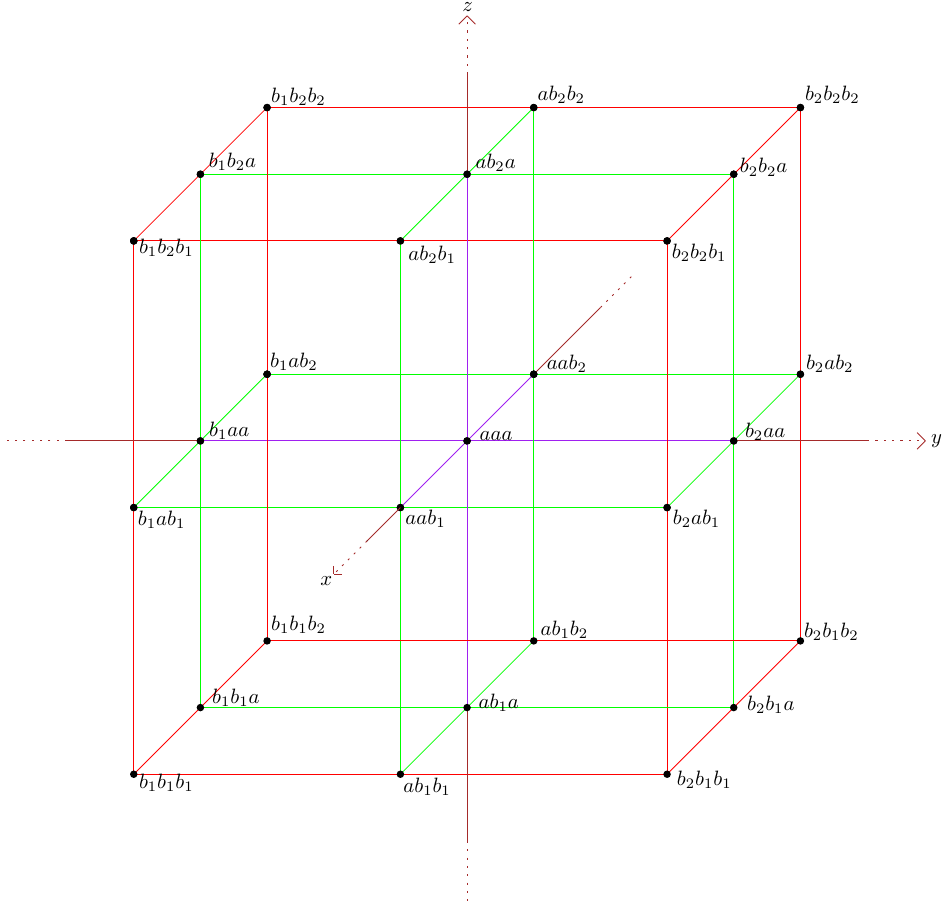}
\caption{Hexane sublevelset $f_3^{-1}((-\infty, 2 \beta + \gamma])$.
Purple edges appear at the level $2\alpha + \gamma$, green edges appear at the level $\alpha + \beta + \gamma$, brown edges appear at the level $2\alpha + \delta$ and the red edges appear at the level $2\beta + \gamma$.}
\label{fig:Cubes}
\end{figure}

At the level $2\beta +\gamma$ in Figure~\ref{fig:Filled_cubes}(F), twenty-four edges appear.
Eight of these edges connect disconnected components, and we arrive at a connected sublevelset that will remain connected for the rest of the filtration.
The remaining sixteen of these edges produce new $\pi_1$ generators.
The sublevelset at this level is homotopy equivalent to $\vee_{31}S^1$.
See Figure~\ref{fig:Cubes} for a labeled version of this sublevelset.

At the level $\alpha+\beta+\delta$ in Figure~\ref{fig:Filled_cubes}(G), twelve edges appear: four connecting the bottom to the top, four connecting the right to the left, and four connecting the front to the back.
Twelve new $\pi_1$ generators are born, which are parallel copies, lying in translates of the coordinate planes, of the $1$-cycles that were born at the level $2\alpha+\delta$.
This sublevelset is homotopy equivalent to $\vee_{43}S^1$.

At the level $2\beta+\delta$ in Figure~\ref{fig:Filled_cubes}(H), twelve edges appear connecting between vertices of the form $b_ib_jb_k$ for $i,j,k\in\{0,1\}$.
These twelve edges produce twelve new $\pi_1$ generators.
The sublevelset is homotopy equivalent to $\vee_{55}S^1$.

At the level $\alpha + 2 \gamma$ in Figure~\ref{fig:Filled_cubes}(I), the first twelve 2-cells appear.
These each kill $\pi_1$ generators born at level $\alpha + \beta + \gamma$.
For example, the $\pi_1$ generator given in Figure~\ref{fig:generators}(left) is filled in by the $2$-cell $c_1ac_1$.
This sublevelset is homotopy equivalent to $\vee_{43}S^1$.


At the level $\beta + 2 \gamma$ in Figure~\ref{fig:Filled_cubes}(J), twenty-four more 2-cells appear.
Sixteen of these 2-cells kill $\pi_1$ generators  born at level $2\beta + \gamma$.
The remaining eight 2-cells create $2$-dimensional spheres, given by the eight hollow cubes arranged in a $2 \times 2 \times 2$ grid.
This sublevelset is homotopy equivalent to $(\vee_{27}S^1)\vee(\vee_{8}S^2)$.
Hence $\pi_1 \cong \ast_{27} \bZ$.
We remark that $\pi_2((\vee_{27}S^1)\vee(\vee_{8}S^2))$ is complicated, 
and not even finitely generated.
Indeed, the universal cover of $(\vee_{27}S^1)\vee(\vee_{8}S^2)$ is homotopy equivalent to $\vee_{\infty}S^2$, the wedge sum of countably-infinite many copies of the sphere $S^2$.
By Hurewicz, $\pi_2(\vee_{\infty}S^2)$ is isomorphic $\oplus_\infty \bZ$, the direct sum of countably-infinitely many copies of $\bZ$.
And since a space and its universal cover have isomorphic homotopy groups $\pi_i$ for $i\ge 2$, this shows that $\pi_2((\vee_{27}S^1)\vee(\vee_{8}S^2)) \cong \oplus_\infty \bZ$.

At the level $\alpha + \gamma + \delta$ in Figure~\ref{fig:Filled_cubes}(K), twelve more 2-cells appear, and the twelve $\pi_1$ generators that were born at the level $\alpha + \beta + \delta$ are merged with the three infinite cycles that were born at the level $2\alpha + \delta$.
These mergings can be seen as homotopies between circles through the new blue 2-cells.
The homotopy type at this level is $(\vee_{15}S^1)\vee(\vee_{8}S^2)$, and hence $\pi_1 \cong \ast_{15} \bZ$ and $\pi_2 \cong \oplus_\infty \bZ$.

At the level $\beta + \gamma + \delta$ in Figure~\ref{fig:Filled_cubes}(L), twenty-four 2-cells appear.
As a result, twelve $\pi_1$ generators that were born at the level of the $2\beta + \delta$ are merged with the $3$ infinite cycles that were born at the level $2\alpha + \delta$.
Also at this level, twelve new $\pi_2$ generators ($2$-cycles) are born, again as hollow cubes.
Four of these hollow cubes arise from the identification of the top and bottom faces, four from the identification of the right and left faces and four from the identification of the front and back faces.
The homotopy type at this level is $(\vee_{3}S^1)\vee(\vee_{20}S^2)$, and hence $\pi_1 \cong \ast_3 \bZ$ and $\pi_2 \cong \oplus_\infty \bZ$.

At the level $\alpha + 2 \delta$ in Figure~\ref{fig:Filled_cubes}(M), we have three new 2-cells, which give three new commutator relations, $[a, b]=aba^{-1}b^{-1}$, $[a, c]=aca^{-1}c^{-1}$ and $[b, c]=bcb^{-1}c^{-1}$.
Here, $a$, $b$, and $c$ are the three $\pi_1$ generators that never die (born at level $2\alpha+\delta$).
Hence, at the level $\alpha + 2 \delta$ the fundamental group becomes abelian, namely $\pi_1 \cong  \oplus_{3}\bZ$.
The homotopy type at this level is $((S^1)^3\setminus\{p\})\vee(\vee_{20}S^2)$, the wedge sum of a $3$-dimensional torus with a point removed along with twenty $2$-spheres.

At the level $3\gamma$ in Figure~\ref{fig:Filled_cubes}(N), the first eight $3$-cells appear.
These $3$-cells are of the form $c_ic_jc_k$ for $i,j,k\in\{1,2\}$, and that is why these $3$-cells are arranged in a $2\times 2\times 2$ grid.
Each $3$-cell kills one of the $\pi_2$ generators born at level $\beta + 2 \gamma$.
The homotopy type at this level is $((S^1)^3\setminus\{p\})\vee(\vee_{12}S^2)$.

At the level $\beta + 2 \delta$ in Figure~\ref{fig:Filled_cubes}(O), the last six $2$-cells appear, which are of the form $b_idd$ and all permutations thereof.
These give birth to six new ``essential'' $\pi_2$ generators.
One of these generators can be seen in Figure~\ref{fig:generators}(right).
The homotopy type at this level is $((S^1)^3\setminus\{p\})\vee(\vee_{18}S^2)$.
At this level, the $2$-skeleton of the final simplicial complex is now complete, and therefore $\pi_1\cong \oplus_3 \bZ$ remains unchanged for the remainder of the filtration.

\begin{figure}[htb]
\includegraphics[width=0.3\textwidth]{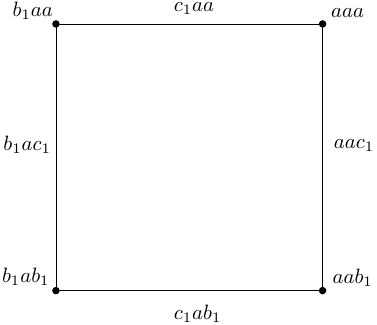}
\hspace{10mm}
\includegraphics[width=0.35\textwidth]{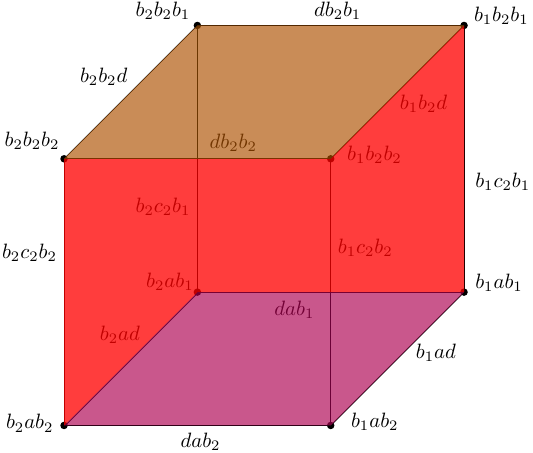}
\caption{
(Left) A $\pi_1$ generator at the level $\alpha+\beta+\gamma$ with connecting edges labeled.
(Right) One of the $\pi_2$ generators at the level $\beta+2 \delta$.
Red-colored faces (left, right, back and front faces) appear at level $\beta + \gamma + \delta$, the purple-colored face (bottom face) appears at level $\alpha + 2 \delta$, and the brown-colored face (top face) appears at level $\beta + 2 \delta.$}
\label{fig:generators}
\end{figure}

At the level $2\gamma+\delta$ in Figure~\ref{fig:Filled_cubes}(P), twelve more $3$-cells appear, of the form $c_ic_jd$ and all permutations thereof.
These kill the twelve $\pi_2$ generators that were born at the level $\beta+\gamma+\delta$.
The homotopy type at this level is $((S^1)^3\setminus\{p\})\vee(\vee_{6}S^2)$.

At the level $\gamma + 2 \delta$, six more $3$-cells appear:
$c_1dd$, $c_2dd$, $dc_1d$, $dc_2d$, $ddc_1$, $ddc_2$.
These six $3$-cells kill the six $\pi_2$ generators born at the level $\beta + 2 \delta$.
The homotopy type at this level is $(S^1)^3\setminus\{p\}$, namely a $3$-dimensional torus with a single point removed.
This is known in the literature as the \emph{spine} of a $3$-manifold~\cite{osborne1974group,stevens1975classification}.

Finally, at the level $3 \delta$, the final $3$-cell $ddd$ appears.
The sublevelset becomes the entire $3$-dimensional torus $(S^1)^3$.
We know that the homotopy groups of $(S^1)^3$ are $\pi_1((S^1)^3)=\oplus_3\bZ$ and $\pi_i((S^1)^3)=0$ for $i\ge 2$, since the homotopy group of a product is the product of the homotopy groups, i.e.\ since $\pi_i((S^1)^3)=\prod_3\pi_i(S^1)$.


Notice that considering homotopy groups in addition to homology groups of the sublevelset as in \cite{Mirth2021} shows us the difference between a wedge of circles and spheres and the product of circles.  
If one only considers persistent homology barcodes, then the barcodes obtained from the sublevelset persistent homology of hexane, as described in \cite{Mirth2021}, could potentially have also been obtained from the filtration of a space that is homotopy equivalent to the wedge sum of three circles, three $2$-dimensional spheres, and a single $3$-dimensional sphere.
However, by also considering persistent \emph{homotopy} groups, we learn that such a wedge sum is not correct and that, instead, hexane is a filtration of the $3$-dimensional torus (a 3-fold product of circles).

\begin{question}
We have described the persistent homotopy groups of the alkane molecules $f_n \colon (S^1)^n\to\bR$, defined via $f_n(\phi_1,\phi_2,\ldots,\phi_n)=f_1(\phi_1)+f_1(\phi_2)+\ldots+f_1(\phi_n)$, in the case of butane ($n=1$), penatane ($n=2$), and hexane ($n=3$).
We ask if it is possible to give a description of these persistent homotopy groups for all values of $n$.
It is conceivable that this goal is within reach; indeed~\cite{Mirth2021} provided an analytical description of the persistent \emph{homology} groups of $f_n \colon (S^1)^n\to\bR$ for all values of $n$.
\end{question}


\FloatBarrier
	
	\bibliographystyle{amsplain}

\begin{thebibliography}{10}

             \bibitem{adamaszek-adams}
	         M.~Adamaszek,  H.~Adams, \emph{The Vietoris-Rips complexes of a circle},  
	        Pacific Journal of Mathematics \textbf{290}, 2017, 1--40. 
	
	         \bibitem{adams-frick-virk}
	         H.~Adams, F.~Frick,  Z.~\v{V}irk, \emph{Vietoris thickenings and complexes have isomorphic homotopy groups}, Journal of Applied and Computational Topology \textbf{7}:2, 2023, 221--241.

\bibitem{AAF}
	         M.~Adamaszek, H.~Adams, F.~Frick, \emph{Metric reconstruction via optimal transport},  
	        SIAM Journal on Applied Algebra and Geometry \textbf{2}, 2018, 597--619.

\bibitem{BanyagaHurtubise}
A.~Banyaga, D.~Hurtubise, \emph{Lectures on {M}orse homology}, Kluwer Academic Publishers Group, 2024.
	         
	         \bibitem{bleile-garin-heiss-maggs-robins}
	         B.~Bleile, A.~Garin, T.~Heiss, K.~Maggs, V.~Robins, \emph{The Persistent Homology of Dual Digital Image Constructions}
		In: Gasparovic, E., Robins, V., Turner, K. (eds) Research in Computational Topology 2. Association for Women in Mathematics Series, 
		\textbf{30}, 2022, 1--26. 
		
		\bibitem{botnan-lesnick}
		M.~Botnan, M.~Lesnick, \emph{ Algebraic stability of zigzag persistence modules}, 
		Algebraic and geometric topology, \textbf{18}:6, 2018, 3133--3204.
		
		\bibitem{brendel}
		P.~Brendel, P.~Dlotko, G.~Ellis, M.~Juda, M.~Mrozek, \emph{Computing fundamental groups from point clouds}, 
		Applicable Algebra in Engineering, Communication and Computing \textbf{26} 1-2, 2015, 27--48
		
                  \bibitem{brown}
		R.~Brown, \emph{Topology and Groupoids}, BookSurge PLC, 2006.

		
		\bibitem{bubenik-scott}
		P.~Bubenik, C.A.~Scott, \emph{ Categorification of persistent homology}, 
		Discrete Comput. Geom., \textbf{51}, 2014, 600--627.

\bibitem{bubenik2021homological}
P.~Bubenik, N.~Mili{\'c}evi{\'c}, \emph{Homological algebra for persistence modules}, Foundations of Computational Mathematics, \textbf{21}, 2021, 1233--1278.
		
		\bibitem{carlsson}
		G.~Carlsson, \emph{Topology and data}, Bulletin Amer. Math. Soc. \textbf{46}, 2009, 255--308.

\bibitem{carlsson2020persistent}
G.~Carlsson, B.~Filippenko, \emph{Persistent homology of the sum metric}, Journal of Pure and Applied Algebra, \textbf{224}:5, 2020, 106244.
		
		\bibitem{chazal}
		F.~Chazal, D.~Cohen Steiner, M.~Glisse, L.~J.~Guibas, S. Y.~Oudot, \emph{Proximity of persistence modules and their diagrams}, 
		Proceedings of the 25th annual symposium on Computational geometry, SoCG 09, 2009, 237--246.
		
		\bibitem{stability}
		D.~Cohen-Steiner, H.~Edelsbrunner, J.~Harer. \emph{Stability of persistence diagrams}, 
		In Proc. 21st ACM Sympos. Comput. Geom., 2005, 263--271.
		
		\bibitem{silva-munch-patel}
		V.~De Silva,  E.~Munch, A.~Patel, \emph{ Categorified Reeb graphs}. Discrete and Computational Geometry, 55(4), 2016, 854--906.
		
		\bibitem{Frossini-Mulazzani}
		P.~Frossini, M.~Mulazzani, \emph{Size homotopy groups for computation of natural size distances}, Bull. Belg. Math. Soc. Simon Stevin \textbf{6(3)}, 1999, 455--464.

\bibitem{gakhar2019kunneth}
H.~Gakhar, J.A.~Perea, \emph{K\"{u}nneth formulae in persistent homology}, arXiv preprint arXiv:1910.05656, 2019.

\bibitem{gillespie2023vietoris}
P.~Gillespie, \emph{Vietoris thickenings and complexes are weakly homotopy equivalent}, Journal of Applied and Computational Topology, \textbf{8}, 2024, 35--53.
		
		\bibitem{ghrist}
		R.~Ghrist, \emph{Barcodes: the persistent topology of data}, Bulletin Amer. Math. Soc, \textbf{45}, 2008, 61--75.
		
		\bibitem{gordon-luecke}
		C.~Gordon, J.~Luecke, \emph{Knots Are Determined by Their Complements}, J. Amer. Math. Soc. \textbf{2}, 1989, 371--415.
		
		\bibitem{hatcher}
		A.~Hatcher, \emph{Algebraic Topology}, Cambridge University Press, 2002.

        \bibitem{hilton}
        P.J.~Hilton, \emph{On the homotopy groups of the union of spheres}, Journal of the London Mathematical Society, Second Series, \textbf{30}:2, 1955, 154-–172.
  

        \bibitem{jorgensen1996development}
        W.L.~Jorgensen, D.S.~Maxwell, J.~Tirrado-Rives, \emph{Development and testing of the OPLS all-atom force field on conformational energetics and properties of organic liquids}, Journal of the American Chemical Society, \textbf{118}:45, 1996, 11225--11236.

  
		\bibitem{lesnick2015}
		M.~Lesnick, \emph{The theory of the interleaving distance on multidimensional persistence modules}, 
		Foundations of Computational Mathematics, \textbf{15} Issue~3, 2015, 613--650.
		
		\bibitem{letscher}
		D.~Letscher, \emph{On Persistent Homotopy, Knotted Complexes and the Alexander Module}, 
		Proceedings of the $3$rd Innovations in Theoretical Computer Science Conference, 2012, 428--441.

\bibitem{lim2022vietoris}
S.~Lim, F.~M{\'e}moli, O.B.~Okutan, \emph{Vietoris--{R}ips persistent homology, injective metric spaces, and the filling radius}, Algebraic \& Geometric Topology,  \textbf{24}:2, 2024, 1019--1100.

            \bibitem{chen1998thermodynamic}
            B.~Chen,  M. G.~Martin, J. I.~Siepmann, \emph{Thermodynamic properties of the {W}illiams, {OPLS-AA}, and {MMFF94} all-atom force fields for normal alkanes}, The Journal of Physical Chemistry B, \textbf{102}:14, 1988, 2578--2586.

            \bibitem{Mirth2021}
		J.~Mirth, Y.~Zhai, J.~Bush, E.G.~Alvarado, H.~Jordan, M.~Heim, B.~Krishnamoorth, M.~Pflaum, A.~Clark, Y.~Zhang, H.~Adams, \emph{Representations of energy landscapes by sublevelset persistent homology: {A}n example with n-alkanes}, The Journal of Chemical Physics, \textbf{154}:11, 2021, 114114.

  \bibitem{milnor2016morse}
  J.~Milnor, \emph{Morse Theory}, Princeton Univesity Press, \textbf{51}, 1963.
		
		\bibitem{memoli-zhou}	
		F.~M\'{e}moli, L.~Zhou, \emph{Persistent homotopy groups of metric spaces},
		to appear in Journal of Topology and Analysis, DOI 10.1142/S1793525324500018.

\bibitem{muszynski2019topological}
G.~Muszynski, K.~Kashinath, V.~Kurlin, M.~Wehner, \emph{Topological data analysis and machine learning for recognizing atmospheric river patterns in large climate datasets}, Geoscientific Model Development, \textbf{12}:2, 2019, 613-628.

\bibitem{osborne1974group}
R.P.~Osborne, R.S.~Stevens, \emph{Group Presentations Corresponding to Spines of 3-Manifolds I}, American Journal of Mathematics \textbf{96}:3, 1974, 454--471.

\bibitem{polterovich2017persistence}
L.~Polterovich, E.~Shelukhin, V.~Stojisavljevi{\'c}, \emph{Persistence modules with operators in {M}orse and {F}loer theory}, Moscow Mathematical Journal, \textbf{17}, 2017, 757--786.

	    \bibitem{smith}
		R.~Smith, \emph{Groundwater-surface water connectivity of heavily modified rivers},  PhD Thesis, Durham University, 2019.

\bibitem{stevens1975classification}
R.S.~Stevens, \emph{Classification of $3$-manifolds with certain spines}, Transactions of the American Mathematical Society, \textbf{205}, 1975, 151--166.

\bibitem{story2023additive}
B.~Story, B.~Sadhu, H.~Adams, A.~Clark, \emph{Additive energy functions have predictable landscape topologies}, The Journal of Chemical Physics, \textbf{158}:16, 2023.
		
		\bibitem{yoon-ghrist-giusti}
		H.~R.~ Yoon, R.~Ghrist, C.~Giusti, \emph{Persistent Extensions and Analogous Bars: Data-Induced Relations Between Persistence Barcodes},    
		Journal of Applied and Computational Topology, \textbf{7}, 2023, 571--617.
		
		\bibitem{ver-hoef}
		L.~Ver Hoef, E.~King, H.~Adams, I.~Ebert-Uphoff, \emph{A Primer on Topological Data Analysis to Support Image Analysis Tasks in Environmental Science},    
		Artificial Intelligence for the Earth Systems \textbf{2}:1, 2022, 1--18.
		
		\bibitem{virk-footprints}
		\v{Z}.~Virk, \emph{Footprints of Geodesics in Persistent Homology}, 
		Mediterranean Journal of Mathematics, \textbf{19}:160, 2022, 1--29.
				

\bibitem{wales2003energy}
D.~Wales, \emph{{E}nergy {L}andscapes: {A}pplications to {C}lusters, {B}iomolecules and {G}lasses}, Cambridge University Press, 2003.

\bibitem{Wales2005}
D.~Wales, \emph{The energy landscape as a unifying theme in molecular science}, Phil.\ Trans.\ R.\ Soc.\ A, \textbf{363}, 2005, 357--377.
		
		
	\end{thebibliography}
	\providecommand{\bysame}{\leavevmode\hbox
		to3em{\hrulefill}\thinspace}
	\providecommand{\MR}{\relax\ifhmode\unskip\space\fi MR }
	\MRhref  \MR
	\providecommand{\MRhref}[2]{
		\href{http://www.ams.org/mathscinet-getitem?mr=#1}{#2}
	} \providecommand{\href}[2]{#2}

\end{document}